\newcounter{stepcounter}
\newtheoremstyle{smallcaps}
    {3pt}                    
    {3pt}                    
    {\itshape}                   
    {}                           
    {\sc}                   
    {.}                          
    {.5em}                       
    {}  
\newtheoremstyle{smallcapsdef}
    {3pt}                    
    {3pt}                    
    {}                   
    {}                           
    {\sc}                   
    {.}                          
    {.5em}                       
    {}  
\theoremstyle{plain}
\newtheorem{thm}{Theorem}[section]
\newtheorem{lem}[thm]{Lemma}
\newtheorem{prop}[thm]{Proposition}
\newtheorem{cor}[thm]{Corollary}
\theoremstyle{definition}
\newtheorem{defn}[thm]{Definition}
\newtheorem{remark}[thm]{Remark}
\date{}
\DeclareFontFamily{U}{mathx}{\hyphenchar\font45}
\DeclareFontShape{U}{mathx}{m}{n}{
      <5> <6> <7> <8> <9> <10>
      <10.95> <12> <14.4> <17.28> <20.74> <24.88>
      mathx10
      }{}
\DeclareSymbolFont{mathx}{U}{mathx}{m}{n}
\DeclareMathAccent{\widecheck}{0}{mathx}{"71}
\DeclareMathAccent{\wideparen}{0}{mathx}{"75}
\newcommand{\e}{\varepsilon}
\newcommand\w{\omega}
\newcommand\Om{\Omega}
\newcommand\del{\partial}
\newcommand\adel{\ol{\partial}}
\newcommand\DEL{\Delta}
\newcommand\bC{{\mathbb C}}
\newcommand\bN{{\mathbb N}}
\newcommand\bR{{\mathbb R}}
\newcommand\F{{\mathcal F}}
\newcommand\M{{\mathcal M}}
\renewcommand{\O}{\mathcal{O}}
\newcommand\fg{{\mathfrak g}}
\newcommand\can{\mathrm{can}}
\newcommand\co{\mathrm{co}}
\newcommand\exd{\mathrm{d}}
\newcommand\haar{\mathrm{\bf h}}
\newcommand\unit{\mathrm{U}}
\newcommand\counit{\mathrm{C}}
\newcommand\id{\mathrm{id}}
\newcommand\proj{\mathrm{proj}}
\newcommand\inv{^{-1}}
\newcommand\coby{\, \square_{H}}
\newcommand\oby{\otimes}
\newcommand\wed{\wedge}
\newcommand\sseq{\subseteq}
\def\qbinom#1#2{\ensuremath{\left[\kern-.3em\left[\genfrac{}{}{0pt}{}{#1}{#2}\right]\kern-.3em\right]_q}}
\newcommand\ol{\overline}
\newcommand\mto{\mapsto}
\newcommand{\OO}{\mathcal{O}}
\def\clap#1{\hbox to 0pt{\hss#1\hss}}
\def\mathllap{\mathpalette\mathllapinternal}
\def\mathllapinternal#1#2{%
  \llap{$\mathsurround=0pt#1{#2}$}}
\newsavebox\qModFirst
\newsavebox\qModSecond
\newcommand{\Mod}{\mathrm{Mod}}
\newcommand{\modz}[2]{
  \sbox{\qModFirst}{$#1$}
  \sbox{\qModSecond}{$#2$}
  \ifdim\wd\qModFirst>\wd\qModSecond
    {}^{\phantom{#1}\mathllap{#1}}_{\phantom{#1}\mathllap{#2}}%
  \else
    {}^{\phantom{#2}\mathllap{#1}}_{\phantom{#2}\mathllap{#2}}%
  \fi\mathrm{mod}_0}
\newcommand{\qMod}[4]{{%
    \sbox{\qModFirst}{$#1$}
    \sbox{\qModSecond}{$#2$}
    \ifdim\wd\qModFirst>\wd\qModSecond
      {}^{\phantom{#1}\mathllap{#1}}_{\phantom{#1}\mathllap{#2}}%
    \else
      {}^{\phantom{#2}\mathllap{#1}}_{\phantom{#2}\mathllap{#2}}%
    \fi
    \Mod^{#3}_{#4}}}
\newcommand{\lMod}[2]{\qMod{#1}{#2}{}{}}
\newcommand{\qmod}[4]{{%
      \sbox{\qModFirst}{$#1$}
      \sbox{\qModSecond}{$#2$}
      \ifdim\wd\qModFirst>\wd\qModSecond
        {}^{\phantom{#1}\mathllap{#1}}_{\phantom{#1}\mathllap{#2}}%
      \else
        {}^{\phantom{#2}\mathllap{#1}}_{\phantom{#2}\mathllap{#2}}%
      \fi
  \mathrm{mod}^{#3}_{#4}}}
\newcommand{\lmod}[2]{\qmod{#1}{#2}{}{}}
\author[F. D\'iaz Garc\'ia]{Fredy D\'iaz Garc\'ia} 
\address{Centro de Ciencias Matem\'aticas, Universidad Nacional Aut\'onoma de M\'exico \\
Antigua Carretera a P\'atzcuaro 8701\\
Col. Ex Hacienda San Jos\'e de la Huerta\\
Morelia, C.P. 58089 M\'exico
}
 \email{lenonndiaz@gmail.com}
\author[A. Krutov]{Andrey Krutov}
\address{Department of Mathematics, University of Zagreb, Bijeni\v{c}ka cesta 30, 10000 Zagreb, Croatia}
\address{Institute of Mathematics, Czech Academy of Sciences, \v{Z}itn\'a 25, 115 67 Prague, Czech Republic}
\address{Independent University of Moscow, Bolshoj Vlasyevskij Per. 11\\ 119002 Moscow, Russia}
\email{krutov@math.cas.cz}
\author[R. \'O Buachalla]{R\'eamonn \'O Buachalla}
\address{D\'epartement de Math\'ematiques, Facult\'e des sciences, Universit\'e Libre de Bruxelles, Boulevard du Triomphe, B-1050 Bruxelles, Belgium}
\address{Mathematical Institute of Charles University, Sokolovsk\'a 83, 186 75 Prague, Czech Republic} 
\email{reamonnobuachalla@gmail.com}
\author[P. Somberg]{Petr Somberg}
\address{Mathematical Institute of Charles University, Sokolovsk\'a 83, 186 75 Prague, Czech Republic} \email{somberg@karlin.mff.cuni.cz}
\author[K. R. Strung]{Karen R. Strung}
\address{Institute of Mathematics, Czech Academy of Sciences, \v{Z}itn\'a 25, 115 67 Prague, Czech Republic}
\email{strung@math.cas.cz}
\title[Holomorphic Relative Hopf Modules]{Holomorphic Relative Hopf Modules over the Irreducible Quantum Flag Manifolds}
\keywords{quantum groups, noncommutative geometry, quantum principal bundles, quantum flag manifolds, complex geometry, holomorphic vector bundles}
  \subjclass[2010]{
  46L87, 
  81R60, 
  81R50, 
  17B37, 
  16T05}  
\begin{document}

\thanks{FDG is partially funded by Conacyt (Consejo Nacional de Ciencia y Tecnolog\'ia, M\'exico). AK was supported by the QuantiXLie Centre of Excellence, a~project cofinanced by the Croatian Government and European Union through the European Regional Development Fund --- the Competitiveness and Cohesion Operational Programme (KK.01.1.1.01.0004). R\'OB acknowledges FNRS support through  a postdoctoral fellowship within the framework of the MIS Grant ``Antipode'' grant number F.4502.18. R\'OB and PS are partially supported from the Eduard \v{C}ech Institute within the framework of the grant GA\v{C}R $19-28628X$, and by the grant GA\v{C}R $306-33/1906357$. Research of KRS and AK is supported by the GA\v{C}R project 20-17488Y and \mbox{RVO: 67985840}.}

\maketitle 

\begin{abstract}
We construct covariant  $q$-deformed holomorphic structures for all finitely generated relative Hopf modules over the irreducible quantum flag manifolds endowed with their Heckenberger--Kolb calculi. In the classical limit these reduce to modules of sections of holomorphic homogeneous vector bundles over irreducible flag manifolds. For the case of simple relative Hopf modules, we show that this covariant holomorphic structure is unique. This generalises earlier work of Majid, Khalkhali, Landi, and van Suijlekom for line modules of the Podle\'s sphere, and subsequent work of Khalkhali and Moatadelro for general quantum projective space. \end{abstract}

\section{Introduction}

In this paper we consider noncommutative generalisations of homogeneous holomorphic vector bundles for the irreducible quantum flag manifolds. Ideas from classical complex geometry have, to a greater or lesser extent, always played a role in noncommutative geometry. This is not surprising, given that no examples can claim to be more influential than the noncommutative torus $\mathbb{T}^2_{\theta}$ and the Podle\'s sphere $\O_q(S^2)$. Both are noncommutative deformations of manifolds carrying a~complex geometry in the classical limit. In fact, both $\mathbb{T}^2$ and $S^2$ are K\"ahler manifolds, the former being a Fano manifold and the latter a Calabi--Yau manifold.  Much of the classical complex geometry of these examples survives deformation intact. Of particular relevance to this paper is the work of Polishchuk and Schwarz on $\theta$-deformed holomorphic vector bundles over the noncommutative torus~\cite{PolishSch,PolishCh}, and Majid's description of the noncommutative complex geometry of the Podle\'s sphere in~\cite{Maj}. 

The notion of a {noncommutative complex structure} was introduced in~\cite{KLvSPodles} and~\cite{BS} as an abstract framework in which to study the noncommutative complex geometry of both the noncommutative torus and the Podle\'s sphere. In particular,  {holomorphic modules}, often called noncommutative holomorphic vector bundles, were introduced. The definition of a holomorphic module was motivated by the classical  Koszul--Malgrange equivalence between holomorphic vector bundles and smooth vector bundles endowed with a flat {$(0,1)$-connection}~\cite{KoszulMalgrange}. Its prototypical examples were those in~\cite{PolishCh,PolishSch} and~\cite{Maj}, as mentioned above. In particular, homogeneous line modules over the Podle\'s sphere  have canonical holomorphic structures in this sense. Moreover, they are known to satisfy a direct $q$-deformation of the classical Borel--Weil theorem~\cite{Maj,KLvSPodles}. An extension of these results to the case of general projective space was established by Khalkhali and Moatadelro~\cite{KKCPN}  using a $q$-deformed Dolbeault anti-holomorphic complex originally introduced in~\cite{SISSACPn} to construct spectral triples.


In~\cite{BeggsMajidChern}, Beggs and Majid later introduced the notion of an Hermitian holomorphic module over an algebra (where they were called Hermitian holomorphic vector bundles) and showed that any such module admits a Chern connection~\cite[Theorem 8.53]{BeggsMajidChern}, see also~\cite[\textsection 4]{BeggsMajid:Leabh}. Around the same time, noncommutative K\"ahler structures were introduced by the third author as a framework for studying noncommutative K\"ahler geometry on quantum homogeneous spaces. In joint work of the third author with \v{S}t\!'ov\'{i}\v{c}ek and van Roosmalen~\cite{OSV} it was shown that Hermitian holomorphic modules, defined over an algebra endowed with a noncommutative K\"ahler structure, have a remarkably rich theory extending the classical situation of Hermitian holomorphic vector bundles over K\"ahler manifolds. For example, the definition of a positive line bundle carries over directly, implying a natural definition of noncommutative Fano structure~\cite{OSV}. In this setting, noncommutative generalisations of  twisted Hodge theory, the Kodaira vanishing theorem, and Serre duality  all hold~\cite{OSV}, giving  powerful results and techniques with which to study holomorphic modules and their associated Dolbeault cohomology.


The next step is to enlarge the family of examples beyond the rather specialised situation of line modules over quantum projective space. A natural choice is the quantum flag manifolds $\O_q(G/L_S)$, the quantum counterpart of classical flag manifolds $G/L_S$, where $G$ is a compact Lie group and $L_S$ is a subgroup of~$G$ indexed by some subset $S$ of the simple roots of~$G$. The quantum flag manifolds form a far more general class of quantum homogeneous spaces whose theory is deeply rooted in the representation theory of Drinfeld--Jimbo quantum groups.  For the irreducible case, which is to say, those quantum flag manifolds which are irreducible (or equivalently Hermitian symmetric spaces) in the classical limit, Heckenberger and Kolb established that each quantum space has an essentially unique covariant $q$-deformed de Rham complex $\Omega^{\bullet}_q(G/L_S)$~\cite{HK,HKdR}. These remarkable differential calculi are some of the most important objects in the study of the noncommutative geometry of quantum groups.  As shown in~\cite{MMF3,MarcoConj} each $\Omega^{\bullet}_q(G/L_S)$ comes endowed with a unique covariant noncommutative K\"ahler structure. For the special case of quantum projective space, the $q$-deformed anti-holomorphic Dolbeault  complex constructed in~\cite{SISSACPn}  can be realised as a subcomplex of  the Heckenberger--Kolb calculus.


This raises the question of whether we can construct holomorphic structures for every finitely generated relative Hopf module over each irreducible quantum flag manifold. Our main result, Theorem~\ref{prop:covconnection}, shows that this is indeed possible.   To establish the result, we need to establish the existence of covariant $(0,1)$-connections and verify that they are flat. In~\cite{KKCPN} the $(0,1)$-connections for higher order projective spaces $\O_q(\mathbb{CP}^n)$ were explicitly constructed and  flatness was verified by direct calculation. Extending this approach to all irreducible quantum flag manifolds would be prohibitively lengthy and tedious. Instead, we show existence and flatness by turning to the general theory of principal comodule algebras, following an approach closer to the original constructions of Brzezinski and Majid~\cite{SMTB00, Maj}. 


While the existence of holomorphic structures is highly interesting in its own right, we are especially interested in the implications of our main result. In particular, the existence of holomorphic structures for line modules is an essential ingredient in a number of associated works. In~\cite{DOW} the holomorphic line modules over the irreducible quantum flag manifolds are shown to satisfy a direct $q$-deformation of the classical Borel--Weil theorem. This extends the case of quantum projective space discussed above, as well as the more general quantum Grassmannian picture established in~\cite{KMOS}. Building on the $q$-deformed Borel--Weil theorem,  all non-trivial line modules over the irreducible quantum flag manifolds were identified in~\cite{DOKSS} as either  positive or negative. This provides valuable information about the behaviour of their $q$-deformed Chern curvatures. These results in turn allowed Das and the third and fourth authors to establish the Fredholm property for any Dolbeault--Dirac operator twisted by a negative line module~\cite{DOSFred}. These operators then become natural candidates for spectral triples in the sense of Connes and Moscovici~\cite{ConnesMosc}. As discussed in Remark \ref{rem:BBW}, the existence of holomorphic structures also raises the question of how to explicitly describe the $U_q(\frak{g})$-module structures of the cohomology groups of the associated complexes, and how this compares with the classical situation.


The paper is organised as follows. In \textsection 2 we recall necessary preliminaries  about differential calculi, complex structures, connections, holomorphic structures, principal comodule  algebras, and strong principal connections.

In \textsection 3 we prove some general results about covariant connections for relative Hopf modules over quantum homogeneous spaces. In particular, we  show that Takeuchi's equivalence allows one to transfer flatness and uniqueness conditions for covariant connections into representation-theoretic conditions. We then observe that for a quantum homogeneous space $B = A^{\co(H)}$, cosemisimplicity of $H$ implies the existence of a left $A$-covariant strong principal connection. Using this result, we are able to produce covariant connections for finitely generated relative Hopf modules with respect to any covariant calculus.

In \textsection 4 the basic definitions and results of Drinfeld--Jimbo quantum groups and their quantised coordinate algebras are recalled. We then present the definition of the quantum flag manifolds, focusing on the special case of irreducible flag manifolds and their  Heckenberger--Kolb calculi.  We apply the general results of~\textsection 3 to the finitely generated relative Hopf modules over $\O_q(G/L_S)$, and using a representation theoretic argument, prove flatness of the $(0,1)$-connections, as well as uniqueness in the irreducible case.

We would like to thank Edwin Beggs and Adam-Christiaan van Roosmalen for useful discussions, as well as both referees for carefully reading the paper and offering a number of suggestions. In particular, we are grateful to the referee for the suggestion of Proposition~\ref{prop:torsion}.

\section{Preliminaries}

In this section we recall the necessary preliminaries  on differential calculi, complex structures, connections, holomorphic structures, and strong principal connections. Note that all algebras are defined over $\mathbb{C}$ and assumed to be unital, and all algebra maps are assumed to be unital.

\subsection{Calculi, Connections, and Holomorphic Modules}

\subsubsection{Differential Calculi}
A {\em differential calculus} $\big(\Om^\bullet \simeq \bigoplus_{k \in \bN_0} \Om^k, \exd\big)$ is a differential graded algebra (dg-algebra)  which is generated in degree $0$ as a dg-algebra, that is to say, it is generated as an algebra by the elements $a, \exd b$, for $a,b \in \Om^0$.  A {\em differential $*$-calculus} is a differential calculus equipped with a  conjugate linear involutive map  $*:\Om^\bullet \to \Om^\bullet$ satisfying $\exd(\w^*) = (\exd \w)^*$, and 
\begin{align*}
\big(\w \wed \nu\big)^*  =  (-1)^{kl} \nu^* \wed \w^*, &  & \text{ for all } \w \in \Om^k, \, \nu \in \Om^l. 
\end{align*}
For a given algebra $B$, a {\em differential calculus  over} $B$ is a differential calculus $(\Omega^\bullet, \exd)$ such that $\Om^0 = B$. Note that if $(\Omega^\bullet, \exd)$ is a differential $*$-calculus over $B$, then $B$ is a \mbox{$*$-algebra}. We say that $\omega \in \Omega^{\bullet}$ is \emph{closed} if $\exd \omega = 0$. 
See~\cite[\textsection 1]{BeggsMajid:Leabh} for a more detailed discussion of differential calculi.

\subsubsection{First-Order Differential Calculi}

A {\em first-order differential calculus} over an algebra $B$ is a pair $(\Om^1,\exd)$, where $\Omega^1$ is a $B$-bimodule and $\exd: B \to \Omega^1$ is a linear map for which the {\em Leibniz rule} holds
\begin{align*}
\exd(ab)=a(\exd b)+(\exd a)b,&  & a,b \in B,
\end{align*}
and for which $\Om^1$ is generated as a left $B$-module by those elements of the form~$\exd b$, for~$b \in B$. The {\em universal first-order differential calculus} over $B$ is the pair
$(\Om^1_u(B), \exd_u)$, where $\Om^1_u(B)$ is the kernel of the multiplication map $m_B: B \otimes B \to B$ endowed
with the obvious bimodule structure, and $\exd_u$ is the map defined by
\begin{align*}
\exd_u: B \to \Omega^1_u(B), & & b \mto 1 \otimes b - b \otimes 1.
\end{align*}
Every first-order differential calculus over $B$ is of the form $\left(\Omega^1_u(B)/N, \,\proj \circ \exd_u\right)$, where $N$ is a $B$-sub-bimodule of $\Omega^1_u(B)$ and  
$$
\proj:\Omega^1_u(B) \to \Omega^1_u(B)/N
$$
is the canonical quotient map. This gives a bijective correspondence between first-order differential calculi and sub-bimodules of $\Omega^1_u(B)$.  

We say that a differential calculus $(\Gamma^\bullet,\exd_{\Gamma})$ {\em extends} a first-order differential calculus $(\Omega^1,\exd_{\Omega})$ if there exists a bimodule isomorphism $\phi:\Omega^1 \to \Gamma^1$ such that  $\exd_{\Gamma} = \phi \circ \exd_{\Omega}$. It can be shown  that any first-order differential calculus admits an extension $\Omega^\bullet$ which is maximal  in the sense that there exists a unique dg-algebra morphism from $\Omega^\bullet$ onto any other extension of $\Omega^1$, see~\cite[\textsection 1.5]{BeggsMajid:Leabh} for details. We call this extension the {\em maximal prolongation} of the first-order differential calculus.

\subsubsection{Connections} \label{subsection:Connections}

Motivated by the Serre--Swan theorem, we think of a finitely generated projective left $B$-module $\F$ as a noncommutative generalisation of a~vector bundle.  
For $\Omega^\bullet$ a differential calculus over an algebra $B$ and $\mathcal{F}$ a finitely generated projective left $B$-module, a \emph{connection} on $\F$ is a $\mathbb{C}$-linear map 
\[
\nabla:\mathcal{F} \to \Omega^1 \otimes_B \F
\]
 satisfying 
\begin{align} \label{eqn:connLR}
\nabla(bf) = \exd b \otimes f + b \nabla f, & & \textrm{ for all } b \in B, f \in \F.
\end{align}
An immediate but important consequence of the definition is that the difference of two connections $\nabla - \nabla'$ is a left $B$-module map.

Any connection can be extended to a map $\nabla: \Omega^\bullet \otimes_B \mathcal{F} \to   \Omega^\bullet \otimes_B \mathcal{F}$ uniquely defined by 
\begin{align*}
\nabla(\omega \otimes f) =   \exd \omega \otimes f + (-1)^{|\omega|} \, \omega \wedge \nabla f,
\end{align*}
where $f \in \F$, and $\omega$ is a homogeneous element of $\Omega^{\bullet}$ of degree  $|\omega|$. The
\emph{curvature} of a connection is the left $B$-module map $\nabla^2: \mathcal{F} \to \Omega^2 \otimes_B
\mathcal{F}$. A connection is said to be {\em flat} if $\nabla^2 = 0$. Since $\nabla^2(\omega \otimes f) =
\omega \wedge \nabla^2(f)$, a connection is flat if and only if  the pair $(\Omega^\bullet \otimes_B \F,
\nabla)$ is a cochain complex.

As in the classical case, for any connection on the space of \mbox{$1$-forms} of a differential calculus we have an associated notion of torsion. (See for example Definition \cite[Definition 3.2.3]{BeggsMajid:Leabh}.)
\begin{defn} \label{defn:Torsion}
Let $(\Omega^{\bullet},\exd)$ be a differential calculus (where  $\Omega^1$ is assumed to be finitely generated and projective as a left $B$-module). The \emph{torsion} of a  connection $\nabla: \Omega^1 \to \Omega^1 \otimes \Omega^1$ is the linear operator
\begin{align*}
T_{\nabla} :=  \wedge \circ \nabla - \exd: \Omega^1 \to \Omega^1 \otimes \Omega^1.
\end{align*}
If $T_{\nabla} = 0$, then we say that $\nabla$ is \emph{torsion-free}.
\end{defn}

\subsubsection{Complex Structures}

In this subsection we recall the definition of a complex structure for a differential calculus, as introduced in~\cite{KLvSPodles, BS}, see also~\cite{BeggsMajid:Leabh}. This gives an abstract characterisation of the properties of the de Rham complex of a classical complex manifold~\cite{HUY}. 

\begin{defn}\label{defnnccs}
A {\em  complex structure} $\Om^{(\bullet,\bullet)}$, for a  differential $*$-calculus  $(\Om^{\bullet},\exd)$, is an $\bN^2_0$-algebra grading $\bigoplus_{(a,b)\in \bN^2_0} \Om^{(a,b)}$ for $\Om^{\bullet}$ such that, for all $(a,b) \in \bN^2_0$: 
\begin{enumerate}
\item \label{compt-grading}  $\Om^k = \bigoplus_{a+b = k} \Om^{(a,b)}$,
\item  \label{star-cond} $\big(\Om^{(a,b)}\big)^* = \Om^{(b,a)}$,
\item  \label{eqn:integrable} $\exd \Om^{(a,b)} \sseq \Om^{(a+1,b)} \oplus \Om^{(a,b+1)}$.
\end{enumerate}
\end{defn}

An element of $\Om^{(a,b)}$ is called an \emph{$(a,b)$-form}. For $ \proj_{\Om^{(a+1,b)}}$, and $ \proj_{\Om^{(a,b+1)}}$,  the projections from $\Om^{a+b+1}$ to $\Om^{(a+1,b)}$, and $\Om^{(a,b+1)}$ respectively, we write
\begin{align*}
\del|_{\Om^{(a,b)}} : = \proj_{\Om^{(a+1,b)}} \circ \exd, & & \ol{\del}|_{\Om^{(a,b)}} : = \proj_{\Om^{(a,b+1)}} \circ \exd.
\end{align*}
It follows from Definition~\ref{defnnccs}.3 that for any complex structure, 
\begin{align*}
\exd = \del + \adel, & &  \adel \circ \del = - \, \del \circ \adel, & & \del^2 = \adel^2 = 0. 
\end{align*}
Thus $\big(\bigoplus_{(a,b)\in \bN^2_0}\Om^{(a,b)}, \del,\ol{\del}\big)$ is a double complex. Both $\del$ and $\adel$ satisfy the graded Leibniz rule. Moreover,   
\begin{align} \label{eqn:stardel}
\del(\w^*) = \big(\adel \w\big)^*, & &  \adel(\w^*) = \big(\del \w\big)^*, & & \text{ for all  } \w \in \Om^\bullet. 
\end{align}
Associated with any complex structure $\Omega^{(\bullet,\bullet)}$ we have  a second complex structure, called its {\em opposite complex structure}, defined as
\begin{align*}
\ol{\Om}^{(\bullet,\bullet)} := \bigoplus_{(a,b) \in \bN^2_0} \ol{\Om}^{(a,b)}, & &  \textrm{ where ~~}
\ol{\Om}^{(a,b)}:= \Omega^{^{(b,a)}}.
\end{align*}
See~\cite[\textsection 1]{BeggsMajid:Leabh} or~\cite{MMF2} for a more detailed discussion of complex structures.

\subsubsection{Holomorphic Modules}

In this subsection we present the notion a holomorphic left $B$-module for an algebra $B$. Such a module should be thought of as a noncommutative holomorphic vector bundle, as has been considered in a number of previous papers, see for example~\cite{BS}, \cite{PolishSch}, and~\cite{KLvSPodles}. Indeed, the definition for holomorphic modules is motivated by the classical Koszul--Malgrange characterisation of holomorphic bundles~\cite{KoszulMalgrange}.  See~\cite{OSV} for a more detailed discussion.

With respect to a choice $\Omega^{(\bullet,\bullet)}$ of complex structure on $\Omega^{\bullet}$, a \emph{$(0,1)$-connection on $\mathcal{F}$} is a connection with respect to the differential calculus $(\Omega^{(0,\bullet)},\adel)$.

\begin{defn}
Let  $(\Omega^\bullet, \exd)$ be a differential $*$-calculus over a $*$-algebra $B$, equipped with a complex structure $\Omega^{(\bullet, \bullet)}$.  A \emph{holomorphic} left $B$-module  is a pair $(\mathcal{F},\adel_{\mathcal{F}})$, where
$\mathcal{F}$ is a finitely generated projective left $B$-module, and  $\adel_{\mathcal{F}}: \mathcal{F} \to
\Omega^{(0,1)} \otimes_B \mathcal{F}$ is a flat $(0,1)$-connection. We call $\adel_{\F}$ the \emph{holomorphic
  structure} of the holomorphic left $B$-module. 
\end{defn}

In the classical setting the kernel of the holomorphic structure map coincides with the space of holomorphic sections of a holomorphic vector bundle. This motivates us to call 
$$
H^0_{\adel}(\F) = \ker\left(\adel_{\F}: \F \to \Omega^{(0,1)} \otimes_B \F\right)\!,
$$
 the \emph{space of holomorphic sections} of $(\F,\adel_{\F})$.


\subsection{Quantum Homogeneous Spaces and Holomorphic Relative Hopf Modules}

From this point in the paper  $A$ and $H$ will \emph{always} denote Hopf algebras defined over $\mathbb{C}$, with coproduct, counit, and antipode denoted by $\Delta,\epsilon$, and $S$ respectively, without explicit reference to the Hopf algebra in question. Moreover, all antipodes are assumed to be invertible. Hence we always have an equivalence between the categories of right and left comodules of any Hopf algebra.

\subsubsection{Comodule Algebras and Quantum Homogeneous Spaces} \label{subsection:PCA}

For $H$ a Hopf algebra, and $V$ a right $H$-comodule with coaction $\DEL_R$, we say that an element $v \in V$ is {\em coinvariant} if $\DEL_R(v) = v \otimes 1$. We denote the subspace of all coinvariant elements by $V^{\co(H)}$ and call it the {\em coinvariant subspace} of the coaction $\Delta_R$.

A  {\em right  $H$-comodule algebra} $P$ is a right $H$-comodule which is also an algebra, such that the comodule
structure map $\DEL_R :P \to P \otimes H$ is an algebra map. Equivalently, it is a monoid object in $\Mod^H$,
the category of right $H$-comodules. Note that for a right $H$-comodule algebra $P$, its coinvariant subspace
$B := P^{\co(H)}$ is a subalgebra of $P$. In what follows we will always use $B$ in this sense.

If the functor $P \otimes_B -: \lMod{}{B} \to \lMod{}{\mathbb{C}}$, from the category of left $B$-modules to the category of complex vector spaces, preserves and reflects exact  sequences, then we say that $P$ is  {\em faithfully flat} as a right module over $B$. Faithful flatness for~$P$ as a  left $B$-module is defined analogously.

In this paper we are interested in a particular type of comodule algebra. Let $\pi : A \to H$ be a surjective Hopf algebra map between Hopf algebras $A$ and $H$. Then a \emph{homogeneous right $H$-coaction} is given by the map 
\begin{align*}
\Delta_R := (\id \otimes \pi) \circ \Delta : A \to A \otimes H.
\end{align*}
Note that $\Delta_R$ gives $A$ the structure of a right $H$-comodule algebra.  The associated \emph{quantum homogeneous space} is defined to be the space of coinvariant elements~$A^{\co(H)}$.

\subsubsection{Takeuchi's Equivalence} Let $B := A^{\co(H)}$ be a quantum homogeneous space. We denote by~$\lmod{A}{B}$ the category of \emph{finitely generated relative Hopf modules}, that is, the category whose  objects are left \mbox{$A$-comodules} \mbox{$\DEL_L:\mathcal{F} \to A \otimes \mathcal{F}$},  endowed with a finitely generated left $B$-module structure, such that 
\begin{align} \label{eqn:TakCompt}
\DEL_L(bf) = \Delta_L(b)\DEL_L(f),  & & \text{ for all } f \in \mathcal{F}, b \in B,
\end{align}
and whose morphism sets ${}^A_B\textrm{Hom}(-,-)$ consist of left $A$-comodule, left $B$-module, maps. It is important to note that $B$ is naturally an object in $\lmod{A}{B}$.

We denote by $\lmod{H}{}$  the category whose objects are finite-dimensional left \mbox{$H$-comodules}, and whose morphisms are left $H$-comodule maps. 
For a quantum homogeneous space $B := A^{\co(H)}$, we denote $B^+ := B \cap \ker(\epsilon)$. Consider the functor
\begin{align*}
\Phi: \lmod{A}{B} \to \lmod{H}{}, & & \mathcal{F} \mapsto \mathcal{F}/B^+\mathcal{F},
\end{align*}
where the left $H$-comodule structure of $\Phi(\mathcal{\F})$ is given by $\Delta_L[f] := \pi(f_{(-1)})
\otimes [f_{(0)}]$ (with square brackets denoting the  coset of an element  in $\Phi(\mathcal{\F})$). If $V\in \lmod{H}{}$ with coaction $\Delta_L : V \to H \otimes V$, then the {\em cotensor product} of $A$ and $V$ is given by
\begin{align*}
A \coby V := \ker(\DEL_R \oby \id - \id \oby \DEL_L: A\oby V \to A \oby H \oby V).
\end{align*}
Using the cotensor product we can define the functor 
\begin{align*}
\Psi:  \lmod{H}{} \to \lmod{A}{B}, & & V \mapsto A \,\square_H V, 
\end{align*}
where the left $B$-module and left $A$-comodule structures of $\Psi(V)$ are defined on the first tensor factor, and if $\gamma$ is a morphism in $\lmod{H}{}$, then $\Psi(\gamma) := \id \otimes \gamma$. The following equivalence was established in~\cite[Theorem 1]{Tak}. 

\begin{thm}[Takeuchi's Equivalence] \label{thm:TakEquiv} Let $B = A^{\co(H)}$ be a quantum homogeneous space such that $A$ is faithfully flat as a right $B$-module. An adjoint equivalence of categories between~$\lmod{A}{B}$ and~$\lmod{H}{}$  is given by the functors $\Phi$ and $\Psi$ and unit, and counit,  natural isomorphisms
\begin{align*}
\unit: \F \to \Psi \circ \Phi(\F), & & f \mto f_{(-1)} \otimes [f_{(0)}], \\
\counit:\Phi \circ  \Psi(V) \to V, \,  & & \Big[\sum_i a^i \otimes v^i\Big] \mto \sum_{i} \e(a^i)v^i.\end{align*} 
\end{thm}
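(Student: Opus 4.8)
The plan is to proceed in four stages: check that $\Phi$ and $\Psi$ are well-defined functors, that $\unit$ and $\counit$ are natural transformations, that the triangle identities hold (so that $\Phi \dashv \Psi$ is an adjunction), and finally that $\unit$ and $\counit$ are isomorphisms, the last stage being where faithful flatness is indispensable.

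For well-definedness of $\Phi$, finite generation of $\F$ over $B$ makes $\Phi(\F) = \F/B^+\F$ a quotient of $\F$ by $B^+$, hence finitely generated over $B/B^+ \cong \bC$ and so finite-dimensional; the formula $[f] \mapsto \pi(f_{(-1)}) \otimes [f_{(0)}]$ descends to a coaction using \eqref{eqn:TakCompt}. For $\Psi$, the cotensor product $A \square_H V$ is a sub-comodule and sub-$B$-module of $A \otimes V$ through the first leg, lies in $\lmod{A}{B}$, and is finitely generated over $B$ as a consequence of faithful flatness. The one structural fact I would isolate at the outset is that $B$ is a left coideal subalgebra, that is $\Delta(B) \subseteq A \otimes B$; this follows by applying $\Delta \otimes \id$ to the coinvariance relation $b_{(1)} \otimes \pi(b_{(2)}) = b \otimes 1$, and it is what makes the various quotient-by-$B^+$ computations collapse cleanly. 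That $\unit(f) = f_{(-1)} \otimes [f_{(0)}]$ lands in $A \square_H \Phi(\F)$, and is $A$-colinear and $B$-linear, is coassociativity together with \eqref{eqn:TakCompt}; that $\counit$ descends from $\epsilon \otimes \id$ to the quotient $\Phi\Psi(V)$ and is $H$-colinear is a short check. The triangle identities then reduce to the counit and antipode axioms of $A$, giving the adjunction.

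The crux is to upgrade this adjunction to an adjoint equivalence, and here faithful flatness of $A$ as a right $B$-module provides the decisive leverage: the functor $A \otimes_B (-)$ on left $B$-modules is then faithfully exact, and in particular reflects isomorphisms. Because $\unit_\F$ is left $B$-linear, it suffices to show that $A \otimes_B \unit_\F$ is an isomorphism. I would use two $A$-level identifications built from the antipode. The first is the relative fundamental theorem: the map $\kappa : A \otimes_B \F \to A \otimes \Phi(\F)$, $a \otimes_B f \mapsto a f_{(-1)} \otimes [f_{(0)}]$, is an isomorphism, with inverse $a \otimes [f] \mapsto a S(f_{(-1)}) \otimes_B f_{(0)}$; well-definedness of both maps is exactly where the left coideal subalgebra property and the antipode axioms enter. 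The second is the Galois isomorphism $A \otimes_B (A \square_H W) \to A \otimes W$ induced by multiplying the two $A$-legs, which is an isomorphism because $B \subseteq A$ is Hopf--Galois, i.e. the canonical map $A \otimes_B A \to A \otimes H$, $a \otimes a' \mapsto a a'_{(1)} \otimes \pi(a'_{(2)})$, is bijective (again via the antipode). A direct check shows that the composite of $A \otimes_B \unit_\F$ with the Galois isomorphism is precisely $\kappa$; since both the Galois map and $\kappa$ are isomorphisms, so is $A \otimes_B \unit_\F$, and faithful flatness then forces $\unit_\F$ itself to be an isomorphism. For $\counit$, I would invoke the triangle identity $\Psi(\counit_V) \circ \unit_{\Psi(V)} = \id$: since $\unit_{\Psi(V)}$ is now known to be invertible, $\Psi(\counit_V)$ is invertible, and applying $A \otimes_B (-)$ followed by the natural Galois isomorphism turns this into invertibility of $A \otimes \counit_V$, whence $\counit_V$ is an isomorphism since $A \otimes_{\bC} (-)$ reflects isomorphisms of vector spaces.

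I expect the main obstacle to be precisely the isomorphism stage: establishing that the canonical map is bijective and that $\kappa$ and its antipode-built inverse are well-defined, and then checking that these $A$-level identifications are natural and compatible with $\unit$, so that faithful flatness can be brought to bear. By contrast, the well-definedness and triangle-identity computations are routine, if careful, manipulations in Sweedler notation.
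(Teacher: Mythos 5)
First, a point of reference: the paper does not prove this statement at all --- it is quoted from Takeuchi and justified by the citation \cite[Theorem 1]{Tak}. So your attempt can only be measured against the standard proof in the literature, and in outline you have reproduced that proof: the adjunction by Sweedler calculus, the purely formal isomorphism $\kappa : A \otimes_B \F \to A \otimes \Phi(\F)$, $a \otimes_B f \mapsto a f_{(-1)} \otimes [f_{(0)}]$ (your antipode-built inverse and its well-definedness check are correct), the reduction of invertibility of $\unit_{\F}$ to the $A$-level via faithful flatness, and the deduction of invertibility of $\counit$ from the triangle identity. All of that is sound.

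The genuine gap is the parenthetical claim that the canonical map $\can : A \otimes_B A \to A \otimes H$ is bijective ``again via the antipode''. The antipode gives \emph{surjectivity}, since $a \otimes \pi(a') = \can\big(aS(a'_{(1)}) \otimes_B a'_{(2)}\big)$; it does not give injectivity. Indeed, your own map $\kappa$ applied to $\F = A$ identifies $A \otimes_B A \cong A \otimes A/B^+A$, and under this identification $\can$ becomes $\id_A \otimes \bar\pi$, where $\bar\pi : A/B^+A \to H$ is induced by $\pi$. So bijectivity of $\can$ is \emph{exactly} the statement $\ker\pi = B^+A$, and your candidate inverse $a \otimes h \mapsto aS(\tilde h_{(1)}) \otimes_B \tilde h_{(2)}$ (for $\tilde h$ a lift of $h$ along $\pi$) is well defined only if that holds. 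This is not a formality, and --- more seriously --- it does not follow from the stated hypotheses at all. Take $A = \O(SL_2)$ and $H = \O(B_+)$ the coordinate algebra of the upper-triangular Borel subgroup, with $\pi$ the restriction map (a surjective Hopf algebra map). Then $A^{\co(H)} = \bC$, because a regular function on $SL_2$ constant on the fibres of $SL_2 \to \bP^1$ is constant; hence $A$ is trivially faithfully flat over its coinvariants, yet $B^+A = 0 \neq \ker\pi$, $\can$ is nowhere near injective, and the adjoint pair $(\Phi,\Psi)$ --- which in this example is restriction/induction between $SL_2$- and Borel-representations --- is visibly not an equivalence. The conclusion is twofold. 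First, your proof cannot be completed as written: its engine, the Galois isomorphism $A \otimes_B (A \, \square_H W) \cong A \otimes W$, is simply unavailable from faithful flatness alone. Second, the statement must be read with the additional condition, implicit in Takeuchi's own formulation (where the quotient coalgebra is $A/B^+A$ rather than an arbitrary $H$ with $B = A^{\co(H)}$), that $\bar\pi : A/B^+A \to H$ is an isomorphism, equivalently that $\can$ is bijective. In the setting the paper actually uses, this is secured separately: $H = \O_q(L_S)$ is cosemisimple, and the paper invokes \cite[Corollary 2.6]{SSHopfGalois} and \cite[Corollary 1.5]{MulSch} in \textsection 3 precisely to obtain the Hopf--Galois property and faithful flatness. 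So what is missing from your argument is not a routine computation but the actual theorem of Takeuchi--Schneider: the derivation of bijectivity of $\can$ from hypotheses that genuinely imply it.
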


The usual tensor product of comodules gives $\lmod{H}{}$  the structure of a monoidal category. Every object $\F \in \lmod{A}{B}$ admits a right $B$-module structure uniquely defined by 
\begin{align*}
\F \times B \to \F, & & (f,b) \mapsto f_{(-2)} b S(f_{(-1)}) f_{(0)},
\end{align*}
giving $\F$ the structure of a bimodule. The usual tensor product of bimodules then endows~$\lmod{A}{B}$ with the structure of a monoidal category. It forms a monoidal subcategory of the category of $B$-bimodules, which for sake of clarity we denote by~$\modz{A}{B}$.
Takeuchi's equivalence  can now be given the structure of a monodial equivalence in the obvious way. In particular, this means that for any monoid object~$\M \in \modz{A}{B}$ 
the corresponding $\Phi(\M) \in \lmod{H}{}$ also has the structure of a monoid object. We will use this fact tacitly throughout the paper.

\subsubsection{Relative Hopf Modules and  Covariant Connections}

 Let $\pi : A \to H$ be a surjective Hopf map and $B = A^{\co(H)}$ a quantum homogeneous space.  A differential calculus $(\Omega^\bullet, \exd)$ over $B$ is said to be \emph{covariant} if the coaction $\Delta_L : B \to A \otimes B$ extends to a (necessarily unique) map $\Delta_L : \Omega^\bullet \to A \otimes \Omega^\bullet$ giving $\Omega^\bullet$ the structure of a monoid object in~$\lmod{A}{B}$, and such that~$\exd$ is a left $A$-comodule map.  For any $\F \in \lmod{A}{B}$, a connection $\nabla : \F \to \Omega^1 \otimes_B \F$ is said to be \emph{covariant} if it is a left $A$-comodule map.

 We say that a first-order differential calculus $\Omega^1(B)$ over $B$ is   {\em left covariant} if there exists a (necessarily unique) left $A$-coaction $\DEL_L: \Omega^1(B) \to A \otimes \Omega^1(B)$ giving $\Omega^1(B)$ the structure of an object in $\lmod{A}{B}$ and such that $\exd$ is a left $A$-comodule map.  Note the universal calculus over $B$ is left $A$-covariant. Moreover, any other first-order differential calculus over $B$, with corresponding $B$-sub-bimodule $N \sseq \Omega^1_u(B)$, is covariant if and only if $N$ is a left $A$-sub-comodule of $\Omega^1_u(B)$. In particular, we note that the maximal prolongation of a covariant first-order differential calculus is covariant.

A complex structure $\Omega^{(\bullet, \bullet)}$ for $\Omega^\bullet$ is said to be \emph{covariant} if the
\mbox{$\mathbb{N}^2_0$-decomposition} of~$\Omega^{\bullet}$ is a decomposition in the category~$\lmod{A}{B}$,
or explicitly if the homogeneous subspace~$\Omega^{(a,b)}$ is a left $A$-sub-comodule of $\Omega^\bullet$, for each $(a, b) \in \mathbb{N}^2_0$. (Note that the grading implies~$\Omega^{(a,b)}$ is automatically a $B$-sub-bimodule.) For any covariant complex structure  the differentials $\del$ and $\adel$ are left $A$-comodule maps.  

\begin{defn}
A \emph{holomorphic relative Hopf module} is a pair $(\F, \adel_\F)$, where \mbox{$\F \in \lmod{A}{B}$},   \mbox{$\adel_\F : \F \to \Omega^{(0,1)} \otimes_B \F$} is a covariant $(0,1)$-connection, and $(\F, \adel_\F)$ is a holomorphic left $B$-module.
\end{defn}

\subsection{Principal Comodule Algebras and Strong Principal  Connections} \label{subsection:SPC}

In this subsection we recall the basic theory of principal comodule algebras, structures of central importance in the paper.

\subsubsection{General Case}

We say that a right $H$-comodule algebra~$(P,\Delta_R)$ is a {\em $H$-Hopf--Galois extension of} $B := P^{\co(H)}$ 
if for  $m_P:P \otimes_B P \to P$ the multiplication of~$P$,
the map
\[
\can := (m_P \otimes \id) \circ (\id \otimes \DEL_R): P \otimes_B P \to P \otimes H,
\]
is a bijection.

\begin{defn}
A {\em principal right $H$-comodule algebra}  is a right $H$-comodule algebra $(P,\DEL_R)$  such that  $P$ is a Hopf--Galois extension of $B := P^{\co(H)}$ and $P$ is faithfully flat as a right and left $B$-module.
\end{defn}

We next recall the notion of a strong principal connection for a right $H$-comodule algebra, and its relationship with the definition of principal comodule algebras.

\begin{defn}
Let $H$ be a Hopf algebra,  $P$ a right $H$-comodule algebra, and $B := P^{\co(H)}$. A \emph{principal  connection} for $P$ is a left $P$-module right $H$-comodule projection \mbox{$\Pi:\Omega^1_u(P) \to \Omega^1_u(P)$} satisfying
$$ 
\ker(\Pi) = P\Omega^1_u(B)P.
$$
A principal connection is said to be \emph{strong} if
$$
(\id - \Pi)\exd_u P \sseq \Omega^1_u(B) P.
$$
\end{defn}

As we now recall,  the existence of a strong principal connection for a comodule algebra is equivalent to the comodule algebra being principal \cite{BRzHajComptesT},\cite{BrzBohm}, see  \cite[\textsection 3.4]{TBGS} for a detailed discussion.

\begin{thm}
A comodule algebra is principal if and only if it admits a strong principal connection.
\end{thm}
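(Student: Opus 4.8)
The plan is to prove both implications through the standard dictionary between the projection $\Pi$ and a \emph{strong connection form}, following Brzezi\'nski--Hajac~\cite{BRzHajComptesT} and Brzezi\'nski--B\"ohm~\cite{BrzBohm}; compare~\cite[\textsection 3.4]{TBGS}. Recall that $\Omega^1_u(P) = \ker(m_P) \sseq P \otimes P$ with $\exd_u p = 1 \otimes p - p \otimes 1$, and that the lifted canonical map $\widetilde{\can}: P \otimes P \to P \otimes H$, $p \otimes p' \mapsto p\, p'_{(0)} \otimes p'_{(1)}$, restricts to a map $\Omega^1_u(P) \to P \otimes \ker(\epsilon)$ whose behaviour on $P\Omega^1_u(B)P$ is governed by the relations defining $P \otimes_B P$. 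The existence of a left $P$-module, right $H$-comodule projection $\Pi$ with $\ker(\Pi) = P\Omega^1_u(B)P$ is therefore equivalent to splitting this structure, and such data is in turn equivalent to a unital, bicolinear map $\ell: H \to P \otimes P$ lifting the translation map, with the strongness condition corresponding to the requirement that $\ell$ take values in $P \otimes P$ rather than merely in $P \otimes_B P$.

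For the implication that a strong principal connection forces principality, I would first use $\id - \Pi$, which projects onto a complement of $P\Omega^1_u(B)P$, together with $\widetilde{\can}$, to write down an explicit two-sided inverse of the canonical map $\can: P \otimes_B P \to P \otimes H$; this establishes that $P$ is a Hopf--Galois extension of $B$. Next I would extract from $\Pi$, via the strongness condition $(\id - \Pi)\exd_u P \sseq \Omega^1_u(B)P$, a left $B$-linear, right $H$-colinear splitting exhibiting $P$ as $H$-equivariantly projective over $B$. By the standard characterisation of faithfully flat Hopf--Galois extensions (Schneider's theorem, using that the antipode of $H$ is invertible), equivariant projectivity together with the Hopf--Galois property yields faithful flatness of $P$ as both a left and a right $B$-module, so $P$ is principal.

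Conversely, assume $P$ is principal. Bijectivity of $\can$ produces the translation map $\tau: H \to P \otimes_B P$, $\tau(h) := \can^{-1}(1 \otimes h) =: h^{[1]} \otimes_B h^{[2]}$, which already satisfies the colinearity and normalisation identities $h^{[1]} h^{[2]} = \epsilon(h) 1_P$ in the quotient. The crucial step is to lift $\tau$ to a genuine strong connection form $\ell: H \to P \otimes P$; here faithful flatness enters, since it guarantees that $P$ is $H$-equivariantly projective over $B$, which is exactly what allows the lift across the surjection $P \otimes P \to P \otimes_B P$ while preserving colinearity and the normalisation $m_P \circ \ell = \eta_P \circ \epsilon$. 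From $\ell$ one then defines $\Pi$ on $\Omega^1_u(P)$ by the standard formula and verifies that it is a left $P$-module, right $H$-comodule projection with $\ker(\Pi) = P\Omega^1_u(B)P$ satisfying the strongness condition.

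The main obstacle is the construction of the lift $\ell$ in the forward direction: the Hopf--Galois property alone supplies only the translation map valued in $P \otimes_B P$, and it is precisely faithful flatness — through equivariant projectivity — that is needed to promote this to a strong connection form valued in $P \otimes P$. All remaining verifications (colinearity, the projection property, and the identification of kernels) are formal manipulations with the comodule algebra structure maps.
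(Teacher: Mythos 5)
The paper itself offers no proof of this statement: it is recalled from the literature, with \cite{BRzHajComptesT}, \cite{BrzBohm} cited and \cite[\textsection 3.4]{TBGS} recommended for details. Your outline follows the strategy of exactly those references, and your converse direction (principal $\Rightarrow$ strong connection) is essentially the correct argument: bijectivity of $\can$ yields the translation map $\tau$, faithful flatness together with invertibility of the antipode gives equivariant projectivity of $P$ over $B$ (Schneider/Schauenburg--Schneider, cf.~\cite{SSHopfGalois}), and the resulting left $B$-linear, right $H$-colinear section $s : P \to B \otimes P$ lets you lift $\tau$ to $\ell := (\id \otimes_B s)\circ\tau$, valued in $P \otimes_B (B \otimes P) \cong P \otimes P$, from which $\Pi$ is recovered by the standard formula. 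Likewise, your extraction of equivariant projectivity from the strongness condition is sound: $s(p) := 1 \otimes p - (\id - \Pi)(\exd_u p)$ lands in $B \otimes P$ and does the job, without any Galois hypothesis.

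The forward direction, however, contains a genuine gap, located exactly at the dictionary you set up in your first paragraph. You claim that the existence of the projection $\Pi$ (left $P$-linear, right $H$-colinear, $\ker(\Pi) = P\Omega^1_u(B)P$, strong) is equivalent to the existence of a unital bicolinear $\ell : H \to P \otimes P$ splitting the lifted canonical map $\widetilde{\can}$, and your proof of the Hopf--Galois property proceeds by using this $\ell$ to invert $\can$. But the implication ``$\Pi$ exists $\Rightarrow$ $\ell$ exists'' is false for a general comodule algebra: take any algebra $P$ with the trivial coaction $p \mapsto p \otimes 1_H$, where $H \neq \mathbb{C}$. Then $B = P$, so $P\Omega^1_u(B)P = \Omega^1_u(P)$, and $\Pi = 0$ is a strong principal connection in the sense of the paper's definition; yet $\can : P \otimes_B P \cong P \to P \otimes H$, $p \mapsto p \otimes 1_H$, is not surjective, so $P$ is not Hopf--Galois and no $\ell$ can exist. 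The point is that recovering $\ell$ (equivalently, an inverse of $\can$) from $\Pi$ requires some way of feeding elements of $H$ into the picture, and this is available only once the Galois condition already holds (or, in the paper's homogeneous situation, once one has the surjection $\pi : A \to H$ and a bicomodule splitting of it). The references avoid this by taking the connection \emph{form} $\ell$ as the primitive notion of strong connection; a correct write-up must either do the same, or establish bijectivity of $\can$ by an independent argument before invoking the $\Pi \leftrightarrow \ell$ dictionary. (Two smaller points: this defect is arguably inherited from the paper's own formulation of the statement; and since $\ker(\Pi) = P\Omega^1_u(B)P$, it is $\Pi$ that projects onto a complement of $P\Omega^1_u(B)P$, while $\id - \Pi$ projects onto $P\Omega^1_u(B)P$ itself, the reverse of what you wrote.)
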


\subsubsection{The Case of Quantum Homogeneous Spaces} \label{subsection:QHSPCA}

In this subsection we restrict to the special case of a quantum homogeneous space $B = A^{\co(H)}$ associated to a Hopf algebra surjection $\pi:A \to H$. First we present a natural  construction for strong principal connections. Consider $H$ as a $H$-bicomodule in the obvious way, and consider $A$ as a $H$-bicomodule with respect to the left and right $H$-coactions $\Delta_L = (\pi \otimes \id) \circ \Delta$ and $\Delta_R = (\id \otimes \pi) \circ \Delta$. Suppose that there exists a $H$-bicomodule map $i:H \to A$ splitting $\pi$, and such that $i(1_H) = 1_A$. Then a left $A$-covariant strong principal connection  is given by
\begin{align} \label{eqn:Pii}
 \Pi := m \circ (\id \otimes \omega) \circ \overline{\mathrm{can}}: \Omega^1_u(A) \to \Omega^1_u(A),
\end{align}
where $\overline{\mathrm{can}}$ is the restriction of $\mathrm{can}$ to $\Omega^1_u(A)$, and 
\begin{align*}
\omega: H \to \Omega^1_u(A), & & h \mapsto S(i(h)_{(1)})\exd_u(i(h)_{(2)}).
\end{align*}
(See \cite[\textsection 24]{MajLeabh} for further details.) Moreover, as shown in~\cite[Proposition 4.4]{SMTB00} by Brzezinski and Majid, this gives an equivalence between left $A$-covariant strong principal connections and $H$-bicomodule splittings of  $\pi$ which send the unit of $H$ to the unit of $A$.

We are interested in strong principal connections because they allow us to construct connections for  any $\F \in \lmod{A}{B}$. Consider first the isomorphism 
\begin{align*}
j: \Om^1_u(B) \oby_B \F \simeq  \Om^1_u(B)A \, \square_H \Phi(\F), & & \omega \otimes f \mapsto \omega f_{(-1)} \otimes [f_{(0)}].
\end{align*}
We claim that a strong principal connection $\Pi$ defines a connection $\nabla$ on $\F$ by
\begin{align*}
 \nabla: \F \to  \Om_u^1(B) \oby_B \F, &   & f \mto  j^{-1} \big(\big((\id - \Pi)\exd_u f_{(-1)}\big)\oby [f_{(0)}]\big).
\end{align*}
Indeed, since $\exd_u$ and $\Pi$ are both right \mbox{$H$-comodule} maps, a right $H$-comodule map is also given by the composition $(\id - \Pi) \circ \mathrm{d}_u$. Hence 
\begin{align*}
\big((\id - \Pi)\exd_u f_{(-1)}\big) \oby [f_{(0)}] \in j\left(\Om_u^1(B) \oby_B \F\right), & & \textrm{ for all \,} f \in \mathcal{F},
\end{align*}
meaning that $\nabla$ defines a  connection. We call $\nabla$ the \emph{connection for $\mathcal{F}$ associated to} $\Pi$. Note that if we additionally assume $\Pi$ to be left $A$-covariant, then it follows that the associated connection $\nabla$ is a left $A$-comodule map.

\section{Covariant Connections and Holomorphic Structures} \label{Sec:Section3}

In this section we use Takeuchi's equivalence to convert questions about existence and uniqueness of connections into representation-theoretic statements.  We also discuss principal comodule algebras and show how cosemisimplicity of a Hopf algebra $H$ can be used to construct left $A$-covariant strong principal connections. This sets up a general framework in terms of which we prove the main results of the paper in Section~\ref{sec:irredQFM}.  Recall that $A$ and $H$  denote Hopf algebras and $B  = A^{\co(H)}$ a quantum homogeneous space.

\subsection{Quotients of Connections}

In this subsection we present some elementary technical results about producing connections for non-universal calculi from connections for universal calculi.

\begin{prop} \label{prop:zeroconn}
For an algebra $B$, let $\F$ be a non-zero finitely generated projective left $B$-module and let $\Omega^{\bullet}(B)$
be a differential calculus over $B$. Then the  zero map ${\F \to \Omega^1(B) \otimes_B \F  }$ is a connection if and only if $\Omega^{\bullet}$ is the zero calculus.
\end{prop}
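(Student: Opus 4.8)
The plan is to prove both directions of the equivalence, with the nontrivial content residing in the forward direction. The key structural fact I would exploit is the defining Leibniz rule \eqref{eqn:connLR}: a connection $\nabla \colon \F \to \Omega^1(B) \otimes_B \F$ must satisfy $\nabla(bf) = \exd b \otimes f + b\nabla f$ for all $b \in B$ and $f \in \F$. If the zero map is to be a connection, then setting $\nabla = 0$ forces $\exd b \otimes f = 0$ in $\Omega^1(B) \otimes_B \F$ for every $b \in B$ and every $f \in \F$.

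For the reverse direction, suppose $\Omega^{\bullet}$ is the zero calculus, so that $\Omega^1(B) = 0$. Then $\Omega^1(B) \otimes_B \F = 0$, and the zero map into the zero module trivially satisfies the Leibniz rule (both sides of \eqref{eqn:connLR} lie in the zero space). Hence the zero map is a connection, and this direction is immediate.

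For the forward direction, assume the zero map is a connection; I want to deduce $\Omega^1(B) = 0$, whence $\Omega^{\bullet} = 0$ since a differential calculus is generated in degree $0$ and $\Omega^1$ generates $\Omega^{\bullet}$ as a dg-algebra. The Leibniz computation above gives $\exd b \otimes f = 0$ in $\Omega^1(B) \otimes_B \F$ for all $b \in B$, $f \in \F$. The idea is to leverage that $\F$ is nonzero, finitely generated, and \emph{projective} to conclude that the tensor factor $\exd b$ itself must vanish. Projectivity of $\F$ lets me choose a dual basis, i.e.\ elements $f_1, \dots, f_n \in \F$ and left $B$-module maps $\varphi_1, \dots, \varphi_n \colon \F \to B$ with $f = \sum_i \varphi_i(f) f_i$ for all $f \in \F$; since $\F \neq 0$, the $f_i$ are not all zero. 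Applying $\id_{\Omega^1} \otimes \varphi_i$ to the relation $\exd b \otimes f_i = 0$ and summing appropriately recovers $\exd b \otimes \bigl(\sum_i \varphi_i(f_i)\cdot\! 1\bigr)$, or more directly: the map $\Omega^1(B) \to \Omega^1(B) \otimes_B \F$, $\eta \mapsto \eta \otimes f_i$ followed by $\id \otimes \varphi_i$ returns $\eta \cdot \varphi_i(f_i)$, and running over a nonzero $f_i$ together with the splitting pins down $\exd b = 0$ for all $b$. Since $\Omega^1(B)$ is generated as a left $B$-module by the elements $\exd b$, this forces $\Omega^1(B) = 0$.

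The main obstacle is the forward implication, and specifically the passage from $\exd b \otimes f = 0$ in the \emph{tensor product} $\Omega^1(B) \otimes_B \F$ to $\exd b = 0$ in $\Omega^1(B)$ itself. This is exactly where nonvanishing and projectivity of $\F$ are essential: over a general module the element $\exd b \otimes f$ could vanish in the tensor product without $\exd b$ vanishing, but the dual-basis splitting supplied by projectivity provides a left-inverse-type map that detects the first tensor factor. I would phrase this cleanly using the dual basis to construct an explicit retraction exhibiting $\Omega^1(B)$ as a direct summand of a suitable power of $\Omega^1(B) \otimes_B \F$, so that the vanishing of all $\exd b \otimes f_i$ yields the vanishing of $\exd b$. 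Once $\exd b = 0$ for all $b \in B$, the generation property of the calculus closes the argument.
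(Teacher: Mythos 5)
Your strategy is the same as the paper's: with $\nabla = 0$ the Leibniz rule forces $\exd b \otimes f = 0$ in $\Omega^1(B) \otimes_B \F$ for all $b \in B$ and $f \in \F$; one then wants to conclude $\exd b = 0$ from projectivity and $\F \neq 0$; and since the calculus is generated in degree zero, $\exd B = 0$ forces $\Omega^{\bullet}$ to vanish in positive degrees. Your converse direction is also the same as the paper's, and is fine. The difference is that the paper asserts the middle implication in a single sentence, whereas you attempt to prove it with a dual basis --- and that attempt exposes a genuine gap.

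The dual-basis identity reads $f = \sum_i \varphi_i(f) f_i$; it does \emph{not} give $\sum_i \varphi_i(f_i) = 1$. Applying $\id \otimes \varphi_j$ to the relations $\exd(b)\,c \otimes f_i = \exd b \otimes c f_i = 0$ yields only $\exd(b)\, c\, \varphi_j(f_i) = 0$, i.e.\ that $\exd b$ annihilates the two-sided trace ideal $\tau(\F) = \sum_{\varphi \in \mathrm{Hom}_B(\F,B)} \varphi(\F)$. Likewise, a retraction exhibiting $\Omega^1(B)$ as a direct summand of a power of $\Omega^1(B) \otimes_B \F$ is exactly the statement that $\F$ is a \emph{generator} of the category of left $B$-modules, i.e.\ that $\tau(\F) = B$; what projectivity provides is the opposite relation, namely that $\Omega^1(B) \otimes_B \F$ is a direct summand of a power of $\Omega^1(B)$, which detects nothing about the first tensor factor. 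Since a nonzero finitely generated projective module need not be a generator, this step cannot be repaired from the stated hypotheses; indeed the implication itself fails in this generality. Take $B = \mathbb{C} \times \mathbb{C}[x]$, $e = (1,0)$, $\F = Be$, and let $\Omega^{\bullet}$ be the de Rham calculus of $\mathbb{C}[x]$, viewed as a calculus over $B$ through the projection $B \to \mathbb{C}[x]$; then $\Omega^1 \otimes_B \F \cong \Omega^1 e = 0$, so the zero map is vacuously a connection while $\Omega^{\bullet} \neq 0$. Your argument does become correct when $\F$ is free, or more generally when $\tau(\F) = B$. (To be fair, the paper's own one-line appeal to projectivity makes the same leap; in its applications the modules are covariant relative Hopf modules over a quantum homogeneous space, whose trace ideals can be shown to be nonzero covariant ideals and hence, by Takeuchi's equivalence, all of $B$, so the downstream results are unaffected.)
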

\begin{proof}
If the zero map were a connection, then we would necessarily have 
\begin{align*}
\exd b \otimes_B f = 0, & & \textrm{ for all } b \in B, \, f \in \F.
\end{align*}
Since $\F$ is by assumption projective as a left $B$-module, this would imply that $\exd b = 0$, for all $b \in B$, and hence that the calculus was trivial. The converse is clear, giving us the claimed equivalence.
\end{proof}

\begin{cor} \label{cor:quotientconn}
For any proper $B$-sub-bimodule $N \sseq \Omega^1_u(B)$, let us denote 
\begin{align*}
\proj_N: \Omega^1_u(B) \to   \Omega^1_u(B)/N =: \Omega^1, & & \omega \mapsto [\omega],
\end{align*}
where  $[\omega]$ denotes the coset of $\omega$ in $\Omega^1_u(B)/N$. If $ \nabla: \F \to \Omega_u^1(B)
\otimes_B \F$ is a connection with respect to the universal calculus, then a non-zero connection with
  respect to~$\Omega^1$ is given by 
\begin{align*}
\nabla': \F \to \Omega^1 \otimes_B \F,  & & f \mapsto (\proj_N \otimes \id) \circ \nabla (f).
\end{align*}
\end{cor}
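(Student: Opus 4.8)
The plan is to check two things in turn: that $\nabla'$ is a genuine connection for the quotient calculus $\Omega^1$, and that it does not vanish. The first is a direct computation, while the non-vanishing is where I would invoke Proposition~\ref{prop:zeroconn} together with the hypothesis that $N$ is proper.

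First I would record that $\proj_N$ is a $B$-bimodule map, since $N$ is a $B$-sub-bimodule of $\Omega^1_u(B)$. Its right $B$-linearity ensures that $\proj_N \otimes \id$ descends to a well-defined map $\Omega^1_u(B) \otimes_B \F \to \Omega^1 \otimes_B \F$, so that $\nabla'$ is meaningful. To verify the Leibniz rule, I would apply $\proj_N \otimes \id$ to the universal identity $\nabla(bf) = \exd_u b \otimes f + b\nabla f$. Writing $\exd = \proj_N \circ \exd_u$ for the differential of $\Omega^1$, the left $B$-linearity of $\proj_N$ gives $\nabla'(bf) = \exd b \otimes f + b\nabla' f$, which is precisely the connection condition~\eqref{eqn:connLR} for $\Omega^1$.

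For the non-vanishing I would argue by contradiction. If $\nabla' = 0$, then since $\nabla'$ is a connection and $\F$ is non-zero, Proposition~\ref{prop:zeroconn} forces $\Omega^1$ to be the zero calculus, that is $\exd = \proj_N \circ \exd_u = 0$, so that $\exd_u b \in N$ for all $b \in B$. But the exact forms $\exd_u b$ generate $\Omega^1_u(B)$ as a left $B$-module, since any $\sum_i a_i \otimes b_i \in \ker(m_B)$ equals $\sum_i a_i \exd_u b_i$; as $N$ is a left $B$-submodule this would give $N = \Omega^1_u(B)$, contradicting the properness of $N$. Hence $\nabla' \neq 0$.

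All the computations here are elementary, so I do not expect a serious obstacle. The one step meriting care is the non-vanishing, where the properness of $N$ must be leveraged through the characterisation of the zero calculus in Proposition~\ref{prop:zeroconn} together with the fact that the $\exd_u b$ generate $\Omega^1_u(B)$ as a left module.
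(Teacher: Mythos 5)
Your proposal is correct and follows essentially the same route as the paper: verify the Leibniz rule by applying $\proj_N \otimes \id$ to the universal identity, then deduce non-vanishing from Proposition~\ref{prop:zeroconn} together with the properness of $N$. Your extra details (well-definedness of $\proj_N \otimes \id$ over $\otimes_B$, and the fact that the forms $\exd_u b$ generate $\Omega^1_u(B)$ as a left $B$-module, so $\exd = 0$ would force $N = \Omega^1_u(B)$) are exactly the steps the paper's terser proof leaves implicit.
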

\begin{proof}
It is clear from the definition of $\nabla'$ that it is a linear map satisfying the Leibniz rule \eqref{eqn:connLR}, which is to say, it is clear that $\nabla'$ is a connection. The fact that 
it is non-zero follows from Proposition \ref{prop:zeroconn} and the assumption that $N$ is a proper $B$-sub-bimodule.
\end{proof}

\subsection{Covariant Connections and Takeuchi's Equivalence}

In this subsection we make some novel observations about the flatness and uniqueness for covariant connections on a relative Hopf module $\F \in\lmod{A}{B}$. The idea is to produce sufficient criteria in terms of the morphism sets of the category~$\lmod{A}{B}$. In practical cases, this allows these questions to be transferred to representation-theoretic form, allowing for a solution by direct calculation. We first give a criteria for flatness.  

\begin{prop} \label{prop:flatness}
If ${}^A_B\mathrm{Hom}(\F,\Omega^{2} \otimes_B \F) = 0$, then any left $A$-covariant connection $\nabla:\F \to \Omega^1 \otimes_B \F$ is necessarily flat.  
\end{prop}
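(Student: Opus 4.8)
The plan is to show that the curvature $\nabla^2 \colon \F \to \Omega^2 \otimes_B \F$ is a morphism in the category $\lmod{A}{B}$, so that the vanishing hypothesis on ${}^A_B\mathrm{Hom}(\F,\Omega^2 \otimes_B \F)$ forces $\nabla^2 = 0$, which by definition means that $\nabla$ is flat. As recalled in \textsection\ref{subsection:Connections}, the curvature $\nabla^2$ is automatically a left $B$-module map for any connection. Hence it remains only to check that, when $\nabla$ is left $A$-covariant, the curvature $\nabla^2$ is also a left $A$-comodule map.

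The key step is to verify that the extension of $\nabla$ to the operator $\nabla \colon \Omega^\bullet \otimes_B \F \to \Omega^\bullet \otimes_B \F$ is again a left $A$-comodule map, where $\Omega^\bullet \otimes_B \F$ carries the tensor product left $A$-coaction coming from the monoidal structure on $\lmod{A}{B}$. Recall that this extension is determined by $\nabla(\omega \otimes f) = \exd \omega \otimes f + (-1)^{|\omega|}\, \omega \wedge \nabla f$. Each of the three ingredients appearing here is a left $A$-comodule map: the differential $\exd$ is a comodule map because the calculus $\Omega^\bullet$ is covariant; the wedge product is a comodule map because $\Omega^\bullet$ is a monoid object in $\lmod{A}{B}$; and $\nabla$ on $\F$ is a comodule map by the covariance hypothesis. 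A direct check then shows that both summands on the right-hand side intertwine the coactions, so the extended $\nabla$ does as well.

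With this in hand the conclusion is immediate: $\nabla^2 = \nabla \circ \nabla$ is a composite of left $A$-comodule maps and hence is itself a left $A$-comodule map. Combined with the first step, this places $\nabla^2$ in ${}^A_B\mathrm{Hom}(\F, \Omega^2 \otimes_B \F)$, which vanishes by hypothesis; therefore $\nabla^2 = 0$. I expect the main obstacle to lie in the key step, namely confirming that the extended connection respects the tensor product coaction, since one must track how the coactions on $\Omega^\bullet$ and on $\F$ combine through the graded Leibniz rule and the balancing over $B$. Once this is established, the remainder of the argument is purely formal.
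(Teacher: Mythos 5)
Your proof is correct and follows essentially the same route as the paper: the paper's own argument simply notes that the curvature of any connection is a left $B$-module map, and that covariance of $\nabla$ makes $\nabla^2$ a left $A$-comodule map, hence a morphism in $\lmod{A}{B}$, which must vanish by the hypothesis. The only difference is that you spell out the intermediate verification (covariance of the extended connection on $\Omega^\bullet \otimes_B \F$) which the paper leaves implicit.
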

\begin{proof}
Since the curvature of any connection is a module map, the curvature of a covariant connection is a morphism. Thus if $^A_B\mathrm{Hom}(\F,\Omega^{2} \otimes_B \F)$ is trivial, $\nabla$ must be flat.
\end{proof}

The second proposition gives an analogous criteria for uniqueness of a covariant connection on a finitely generated relative Hopf module.

\begin{prop} \label{prop:uniqueness}
For $\F \in \lmod{A}{B}$ such that ${}^A_B\mathrm{Hom}(\F,\Omega^{1} \otimes_B \F) = 0$, there  exists at most one covariant connection for $\F$. 
\end{prop}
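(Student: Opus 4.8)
The plan is to exploit the same observation that drove Proposition~\ref{prop:flatness}: namely, that the difference of two connections is a left $B$-module map, and that for covariant connections this difference is in fact a morphism in $\lmod{A}{B}$. The vanishing of the relevant $\mathrm{Hom}$-space then forces the difference to be zero.

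First I would suppose, for the sake of contradiction (or rather, to prove equality directly), that $\nabla$ and $\nabla'$ are both covariant connections for $\F$ with respect to $\Omega^1$. As noted in the discussion following \eqref{eqn:connLR}, the difference $\nabla - \nabla'$ is a left $B$-module map $\F \to \Omega^1 \otimes_B \F$, since the Leibniz terms $\exd b \otimes f$ cancel. Next I would observe that because both $\nabla$ and $\nabla'$ are left $A$-comodule maps by the covariance assumption, their difference $\nabla - \nabla'$ is also a left $A$-comodule map. Hence $\nabla - \nabla'$ is simultaneously a left $B$-module map and a left $A$-comodule map, which is precisely the data of a morphism in the category $\lmod{A}{B}$; that is,
\begin{align*}
\nabla - \nabla' \in {}^A_B\mathrm{Hom}(\F, \Omega^1 \otimes_B \F).
\end{align*}

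Finally, by the hypothesis that ${}^A_B\mathrm{Hom}(\F, \Omega^1 \otimes_B \F) = 0$, the difference $\nabla - \nabla'$ must be the zero map, so $\nabla = \nabla'$. This establishes that there is at most one covariant connection, completing the proof. I expect no serious obstacle here, as this argument is essentially the uniqueness analogue of the flatness criterion already established; the only point requiring mild care is verifying that $\Omega^1 \otimes_B \F$ is indeed an object of $\lmod{A}{B}$ (so that the $\mathrm{Hom}$-space is meaningful), but this follows from $\Omega^1$ being an object of $\lmod{A}{B}$ via the covariance of the calculus and the monoidal structure on $\lmod{A}{B}$ recalled after Takeuchi's equivalence.
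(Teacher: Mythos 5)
Your proposal is correct and follows exactly the paper's own argument: the difference of two covariant connections is simultaneously a left $B$-module map and a left $A$-comodule map, hence a morphism in $\lmod{A}{B}$, which vanishes by hypothesis. Your extra remark that $\Omega^1 \otimes_B \F$ lies in $\lmod{A}{B}$ via the covariance of the calculus and the monoidal structure is a sound (if implicit in the paper) point of care.
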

\begin{proof}
Since the difference of any two connections is a module map and the difference of two comodule maps is again a
comodule map, the difference of two covariant connections is a morphism in $\lmod{A}{B}$. Thus if $ {}^A_B\mathrm{Hom}(\F,\Omega^{1} \otimes_B \F)$ is trivial, then there exists at most one covariant connection $\F \to \Omega^1 \otimes_B \F$.
\end{proof}

We direct the interested reader to~\cite[\textsection 3.1]{DOKSS} for a specialisation of these results to the case of factorisable irreducible CQH-Hermitian spaces, a general framework axiomatising properties of the irreducible quantum flag manifolds and their Heckenberger--Kolb calculi as presented in  \textsection \ref{sec:irredQFM}.

\subsection{Principal Comodule Algebras and Cosemisimple Hopf Algebras}

In this subsection we discuss comodule algebras $B = A^{\mathrm{co}(H)}$ for which $H$ is a cosemisimple Hopf algebra. We begin by recalling the definition of cosemisimplicity. 

\begin{defn} \label{defn:CSS}
A Hopf algebra $A$ is  \emph{cosemisimple} if it satisfies the following
three equivalent conditions:
\begin{enumerate}
\item $A$ is the direct sum of its cosimple subcoalgebras, that is subcoalgebras which have no proper subcoalgebras,
\item  in the abelian category~$\lMod{A}{}$  of  left $A$-comodules all short exact sequences split,
\item there exists a unique linear map $\mathbf{h} : A \to \mathbb{C}$, which we call the \emph{Haar functional},
satisfying $\mathbf{h}(1) = 1$, and
\[ (\id \otimes \mathbf{h}) \circ \Delta(a) = \mathbf{h}(a) 1, \qquad (\mathbf{h} \otimes \id) \circ \Delta(a) = \mathbf{h}(a) 1.\]
\end{enumerate}
\end{defn}
For details about the equivalence of these three properties see~\cite[\S11.2]{KSLeabh}. Here we need only recall the implication from (1) to (3): With respect to the decomposition of $A$ into its simple subcoalgebras, the associated Haar functional is given by projection onto the trivial sub-coalgebra $\mathbb{C}1_A$.

Consider $\qMod{H}{}{H}{}$ the category whose objects are finite-dimensional $H$-bicomodules and whose morphisms are $H$-bicomodule maps.  In this paper, all Hopf algebras are assumed to have invertible antipodes, so we have an equivalence between $\lMod{H}{}$ the category of finite-dimensional  right $H$-comodules,  and $\Mod^H$ the category of finite-dimensional  left $A$-comodules. Hence we have an equivalence of categories 
\begin{align*}
\qMod{H}{}{H}{} \simeq \lMod{H\otimes H}{},
\end{align*}
where $H \otimes H$ is the usual tensor product of Hopf algebras. Denoting the Haar of $H$ by $\haar$,  the linear map  defined on simple tensors by
\begin{align*}
\haar_{H \otimes H} : H \otimes H \to \mathbb{C}, & & g \otimes g' \mapsto \haar(g)\haar(g'),
\end{align*}
is readily seen to be a Haar functional for $H \otimes H$ in the sense of Definition \ref{defn:CSS}. It follows that $H \otimes H$ is a cosemisimple Hopf algebra. Hence $\lMod{H \otimes H}{}$ is a semisimple abelian category, meaning that $\qMod{H}{}{H}{}$ is a semisimple abelian category. 

Let $\pi : A \to H$ be a Hopf algebra surjection and $B = A^{\co(H)}$ the associated quantum homogeneous space. It is well known that cosemisimplicity of $H$ implies that $(A,\Delta_R)$ is a 
principal $H$-comodule algebra. For example, it was shown in~\cite[Corollary 1.5]{MulSch} that 
cosemisimplicity of $H$ implies that $A$ is faithfully flat as a left and right $B$-module, 
and it follows from~\cite[Corollary 2.6]{SSHopfGalois} that $A$ is a $H$-Hopf--Galois extension of $B$.
In fact cosemisimplicity implies a  stronger result, namely the existence of a strong principal connection which is left $A$-covariant. This easy observation is undoubtedly well known to the experts, but we include a proof for sake of completeness. 

\begin{lem} \label{prop:COSS.PCA} Let $\pi\colon A \to H$ be a Hopf algebra surjection and let $\Delta_R$ denote the associated homogeneous right $H$-coaction on $A$.  If $H$ is a cosemisimple Hopf algebra, then~$\Omega^1_u(A)$ admits a left $A$-covariant strong principal  connection. In particular, $(A, \Delta_R)$ is a principal comodule algebra.
\end{lem}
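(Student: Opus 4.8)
The plan is to build, out of the cosemisimplicity of $H$, a unit-preserving $H$-bicomodule splitting $i \colon H \to A$ of $\pi$, and then to invoke the equivalence of Brzezinski and Majid recalled in \textsection\ref{subsection:QHSPCA}: such a splitting determines, via \eqref{eqn:Pii}, precisely a left $A$-covariant strong principal connection on $\Omega^1_u(A)$. Once such a connection is produced, the final assertion follows immediately, since a comodule algebra admitting a strong principal connection is principal, by the theorem of \textsection\ref{subsection:SPC}.

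First I would record that $\pi$ is a surjective morphism of $H$-bicomodules, where $A$ carries the left and right homogeneous coactions $\Delta_L = (\pi \oby \id)\circ \Delta$ and $\Delta_R = (\id \oby \pi)\circ \Delta$, and $H$ carries its regular bicomodule structure given by $\Delta$. Indeed, since $\pi$ is a coalgebra map we have $\Delta \circ \pi = (\pi \oby \pi)\circ \Delta$, which says exactly that $\pi$ intertwines both coactions. Next, as established in the discussion preceding the lemma, cosemisimplicity of $H$ forces cosemisimplicity of $H \oby H$, so that by Definition~\ref{defn:CSS} every short exact sequence of $(H \oby H)$-comodules --- equivalently, of $H$-bicomodules --- splits. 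Splitting the sequence $0 \to \ker(\pi) \to A \xrightarrow{\pi} H \to 0$ yields an $H$-bicomodule map $i_0 \colon H \to A$ with $\pi \circ i_0 = \id_H$.

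The hard part is that $i_0$ need not send $1_H$ to $1_A$, and one cannot repair this naively by subtracting a multiple of the counit, since $\epsilon \colon H \to \mathbb{C}$ is not a bicomodule map. Instead I would correct $i_0$ using the Haar functional $\haar$ of $H$. Put $k_0 := 1_A - i_0(1_H)$. Both $1_A$ and $i_0(1_H)$ are bi-coinvariant --- the former because $\Delta_L(1_A) = 1_H \oby 1_A$ and $\Delta_R(1_A) = 1_A \oby 1_H$, the latter because $1_H$ is bi-coinvariant in $H$ and $i_0$ is a bicomodule map --- so $k_0$ spans a trivial $H$-subbicomodule of $A$, while $\pi(k_0) = 1_H - 1_H = 0$. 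The Haar property identifies $h \mapsto \haar(h)\,1_H$ with the bicomodule projection of $H$ onto its trivial isotypic component $\mathbb{C}1_H$, so
\begin{align*}
i \colon H \to A, && h \mapsto i_0(h) + \haar(h)\, k_0,
\end{align*}
is again an $H$-bicomodule map. A direct check then gives $\pi \circ i = \id_H$ (as $k_0 \in \ker \pi$) and $i(1_H) = i_0(1_H) + k_0 = 1_A$ (as $\haar(1_H) = 1$), so that $i$ is the required unit-preserving splitting.

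Feeding $i$ into \eqref{eqn:Pii} produces the desired left $A$-covariant strong principal connection $\Pi$ on $\Omega^1_u(A)$. Since $(A, \Delta_R)$ thereby admits a strong principal connection, the theorem of \textsection\ref{subsection:SPC} shows that $(A, \Delta_R)$ is a principal comodule algebra, completing the argument. I expect the only genuine subtlety to be the unit-preservation step; the existence of some bicomodule splitting is immediate from semisimplicity, and the passage from splitting to connection, and from connection to principality, are both direct applications of the quoted results.
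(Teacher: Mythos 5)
Your proof is correct and takes essentially the same route as the paper: cosemisimplicity of $H$ makes the category of $H$-bicomodules semisimple, yielding a unit-preserving $H$-bicomodule splitting $i$ of $\pi$, which via the Brzezinski--Majid correspondence of \textsection\ref{subsection:QHSPCA} gives the left $A$-covariant strong principal connection, and hence principality. The only difference is that you explicitly justify the unit-preservation step with the Haar-functional correction $i = i_0 + \haar(\cdot)\,k_0$, a detail the paper's proof simply asserts by saying such a splitting ``can be chosen''; your verification of that point is accurate and fills in the one gap the paper leaves implicit.
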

\begin{proof}
Since $\pi\colon A \to H$ is a Hopf algebra map, it is necessarily a $H$-bicomodule map.  Since $H$ is cosemisimple, $^{H \otimes H}\mathrm{Mod}$ is semisimple. Hence we can choose a $H$-bicomodule map  $i\colon H \to  A$ splitting $\pi$ and satisfying $i(1_H) = 1_A$. It now follows from the discussions of \textsection \ref{subsection:QHSPCA} that $(A,\Delta_R)$ is a principal comodule algebra admitting a left $A$-covariant strong principal  connection.
\end{proof}

\begin{prop}
Assume that $H$ is cosemisimple. Let  $\F \in\lmod{A}{B}$ and  let $\Omega^{\bullet}$ be a left \mbox{$A$-covariant} differential calculus over $B = A^{\co(H)}$.
\begin{enumerate}
\item  There exists an associated left $A$-covariant connection $\nabla\colon\F \to \Omega^1 \otimes_B \F$. 

\item If we additionally assume that $\Omega^{\bullet}$ is a differential $*$-calculus endowed with a covariant complex structure~$\Omega^{(\bullet,\bullet)}$, then there will exist a left $A$-covariant $(0,1)$-connection $\adel_{\F}: \F \to \Omega^{(0,1)} \otimes_B \F$. 

\end{enumerate}
\end{prop}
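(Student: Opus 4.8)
The plan is to assemble the proposition directly from the machinery developed in the preceding subsection, with Lemma~\ref{prop:COSS.PCA} doing the heavy lifting. Since $H$ is cosemisimple, Lemma~\ref{prop:COSS.PCA} guarantees that $\Omega^1_u(A)$ admits a left $A$-covariant strong principal connection $\Pi$, and that $(A,\Delta_R)$ is a principal comodule algebra. In particular, $A$ is faithfully flat as a right $B$-module, so Takeuchi's equivalence (Theorem~\ref{thm:TakEquiv}) applies and all the constructions of \textsection\ref{subsection:QHSPCA} are available.

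\medskip

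For part (1), I would first invoke the construction recalled at the end of \textsection\ref{subsection:QHSPCA}: the strong principal connection $\Pi$ determines a connection
\begin{align*}
\nabla_u: \F \to \Omega^1_u(B) \otimes_B \F
\end{align*}
with respect to the \emph{universal} calculus, and since $\Pi$ is left $A$-covariant, $\nabla_u$ is a left $A$-comodule map. The given calculus $\Omega^\bullet$ is left $A$-covariant, so its first-order part corresponds to a left $A$-sub-comodule $N \sseq \Omega^1_u(B)$, with $\Omega^1 = \Omega^1_u(B)/N$. I would then apply Corollary~\ref{cor:quotientconn} to push $\nabla_u$ down to a connection
\begin{align*}
\nabla := (\proj_N \otimes \id) \circ \nabla_u : \F \to \Omega^1 \otimes_B \F.
\end{align*}
The only point requiring a remark beyond Corollary~\ref{cor:quotientconn} is covariance: since $\proj_N$ is a left $A$-comodule map (because $N$ is a sub-comodule) and $\nabla_u$ is a left $A$-comodule map, the composite $\nabla$ is again a left $A$-comodule map, hence covariant. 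This gives part (1).

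\medskip

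For part (2), I would observe that the covariant complex structure equips $\Omega^\bullet$ with, in particular, the first-order calculus $(\Omega^{(0,\bullet)}, \adel)$ whose degree-one part is $\Omega^{(0,1)}$. By covariance of the complex structure, $\Omega^{(0,1)}$ is itself a left $A$-sub-comodule of $\Omega^1$, and $\adel$ is a left $A$-comodule map; thus $(\Omega^{(0,1)}, \adel)$ is again a left $A$-covariant first-order differential calculus over $B$, arising as a quotient of $\Omega^1_u(B)$ by a left $A$-sub-comodule. Applying exactly the same argument as in part (1) to this calculus — that is, composing $\nabla_u$ with the projection onto $\Omega^{(0,1)}$ — yields a left $A$-covariant $(0,1)$-connection $\adel_\F : \F \to \Omega^{(0,1)} \otimes_B \F$, as required.

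\medskip

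I expect no serious obstacle here, since the proposition is essentially a corollary of Lemma~\ref{prop:COSS.PCA} together with the quotient construction; the one place meriting care is verifying that covariance is preserved under passing to the quotient calculus and to the $(0,1)$-part, which reduces to checking that the relevant projection maps are comodule morphisms. This is immediate once one notes that the sub-bimodules in question ($N$ and the kernel of the projection onto $\Omega^{(0,1)}$) are left $A$-sub-comodules, a fact guaranteed respectively by left covariance of $\Omega^\bullet$ and by covariance of the complex structure.
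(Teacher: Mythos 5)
Your proposal is correct and follows essentially the same route as the paper: invoke Lemma~\ref{prop:COSS.PCA} to obtain a left $A$-covariant (strong principal, hence universal) connection on $\F$, push it down to the given calculus via Corollary~\ref{cor:quotientconn}, and compose with the projection onto $\Omega^{(0,1)}$ for part (2). Your extra remarks verifying that covariance survives the quotient (because the relevant sub-bimodules are left $A$-sub-comodules, so the projections are comodule maps) merely make explicit what the paper leaves implicit.
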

\begin{proof}
Since $A$ is a principal comodule algebra, we know from the discussions in~\textsection \ref{subsection:SPC}
that $\F$ admits an associated universal connection $\nabla:\F \to \Omega^1_u(B) \otimes_B \F$. By Corollary  \ref{cor:quotientconn}, composing $\nabla$ with the quotient map $\Omega^1_u(B) \otimes_B \F \to \Omega^1 \otimes_B \F$ will give a left $A$-covariant connection
$
\nabla: \F \to \Omega^{\bullet} \otimes_B \F
$ 
for the non-universal calculus~$\Omega^{\bullet}$. Finally we note that if $\Omega^{\bullet}$ is endowed with a covariant complex structure $\Omega^{(\bullet,\bullet)}$, then 
$$
\adel_{\F}:= (\proj_{\Omega^{(0,1)}} \otimes \id) \circ \nabla: \F \to  \Omega^{(0,1)} \otimes_B \F
$$
is a left $A$-covariant $(0,1)$-connection.
\end{proof}

\section{Holomorphic Relative Hopf Modules over Quantum Flag Manifolds}
\label{sec:irredQFM}

In this section we present the primary results of the paper, namely the existence of covariant holomorphic structures for any relative Hopf module over the irreducible quantum flag manifolds, and uniqueness of such structures in the irreducible case. We first recall the necessary definitions and results about Drinfeld--Jimbo quantum groups, quantum flag manifolds, and the Heckenberger--Kolb  differential calculi over the irreducible quantum flag manifolds, and then follow with the  existence and uniqueness results for holomorphic relative Hopf modules.

\subsection{Drinfeld--Jimbo Quantum Groups}

Let $\frak{g}$ be a finite-dimensional complex semi\-simple Lie algebra of rank $r$. We fix a Cartan subalgebra $\frak{h}$ with corresponding root system $\Delta \sseq \frak{h}^*$, where $\frak{h}^*$ denotes the linear dual of $\frak{h}$.  
With respect to a choice of simple roots $\Pi = \{\alpha_1, \dots, \alpha_r\}$, denote by $(\cdot,\cdot)$ the symmetric bilinear form induced on $\frak{h}^*$ by the  Killing form of $\frak{g}$, normalised so that any shortest simple root $\alpha_i$ satisfies $(\alpha_i,\alpha_i) = 2$. The {\em coroot} $\alpha_i^{\vee}$ of a simple root $\alpha_i$ is defined by
\begin{align*}
\alpha_i^{\vee} := \frac{\alpha_i}{d_i} =  \frac{2\alpha_i}{(\alpha_i,\alpha_i)}, & & \text{ where } d_i := \frac{(\alpha_i,\alpha_i)}{2}.
\end{align*}
The Cartan matrix $A = (a_{ij})_{ij}$ of $\frak{g}$ is the $(r \times r)$-matrix defined by
$
a_{ij} := \big(\alpha_i^{\vee},\alpha_j\big).
$
Let $\{\varpi_1, \dots, \varpi_r\}$ denote the corresponding set of fundamental weights of~$\mathfrak{g}$, which is to say, the dual basis of the coroots.

Let  $q \in \bR$ such that  $q \notin \{ -1,0,1\}$, and denote $q_i := q^{d_i}$. The \emph{quantised universal enveloping algebra}  $U_q(\frak{g})$ is the  noncommutative associative  algebra  generated by the elements   $E_i, F_i, K_i$, and  $K^{-1}_i$, for $ i=1, \ldots, r$,  subject to the relations 
\begin{align*}
 K_iE_j =  q_i^{a_{ij}} E_j K_i, \quad  K_iF_j= q_i^{-a_{ij}} F_j K_i, \quad  K_i K_j = K_j K_i, \quad K_iK_i^{-1} = K_i^{-1}K_i = 1,\\
  E_iF_j - F_jE_i  = \delta_{ij}\frac{K_i - K\inv_{i}}{q_i-q_i^{-1}}, ~~~~~~~~~~~~~~~~~~~~~~~~~~~~~~~~~~~~~~~~~
\end{align*}
along with the \emph{quantum Serre relations}  
\begin{align*}
  \sum\nolimits_{s=0}^{1-a_{ij}} (-1)^s  \begin{bmatrix} 1 - a_{ij} \\ s \end{bmatrix}_{q_i}
   E_i^{1-a_{ij}-s} E_j E_i^s = 0,\quad \textrm{ for }  i\neq j,\\
  \sum\nolimits_{s=0}^{1-a_{ij}} (-1)^s \begin{bmatrix} 1 - a_{ij} \\ s \end{bmatrix}_{q_i}
   F_i^{1-a_{ij}-s} F_j F_i^s = 0,\quad \textrm{ for }  i\neq j;
\end{align*}
where we have used the $q$-binomial coefficients defined according to 
\begin{align*}
[n]_q! = [n]_q[n-1]_q \cdots [2]_q[1]_q, & & \textrm{ where \, } 
[m]_q  := \frac{q^m-q^{-m}}{q-q^{-1}}.
\end{align*}
A Hopf algebra structure is defined  on $U_q(\frak{g})$ by
\begin{align*}
\DEL(K_i) &= K_i \oby K_i,\quad  \DEL(E_i) = E_i \oby K_i + 1 \oby E_i, \quad \DEL(F_i) = F_i \oby 1 + K_i\inv  \oby F_i,\\
& \qquad S(E_i) =  - E_iK_i\inv,    \quad S(F_i) =  -K_iF_i, \quad  S(K_i) = K_i\inv,  \\
&\qquad \qquad \qquad \e(E_i) = \e(F_i) = 0, ~~ \e(K_i) = 1.     
\end{align*}  
A Hopf $*$-algebra structure, called the \emph{compact real form} of $U_q(\frak{g})$, is defined by
\begin{align*}
K^*_i : = K_i, & & E^*_i := K_i F_i, & &  F^*_i  :=  E_i K_i \inv. 
\end{align*} 

Let $\mathcal{P}$ be the weight lattice of~$\fg$, and $\mathcal{P}^+$ its set of dominant integral weights.  For each $\mu\in\mathcal{P}^+$ there exists an irreducible finite-dimensional $U_q(\frak{g})$-module  $V_\mu$, uniquely defined by the existence of a vector $v_{\mu}\in V_\mu$, which we call a {\em highest weight vector},  satisfying
\[
  E_i \triangleright v_\mu=0,\qquad K_i \triangleright v_\mu = q^{(\alpha_i, \mu)} v_\mu, \qquad
  \text{for all $i=1,\ldots,r$.}
\]
Moreover, $v_{\mu}$ is the unique such element up to scalar multiple. We call any finite direct sum of such
$U_q(\frak{g})$-representations a {\em type-$1$ representation}. In general, a non-zero vector $v\in V_\mu$ is called a \emph{weight vector} of weight~$\mathrm{wt}(v) \in \mathcal{P}$ if
\begin{align}\label{eq:Kweight}
K_i \triangleright v = q^{(\mathrm{wt}(v), \alpha_i)} v, & & \textrm{ for all } i=1,\ldots,r.
\end{align}
Finally, we note that since $U_q(\frak{g})$ has an invertible antipode, we have an equivalence between $\lMod{}{U_q(\frak{g})}$, the category of  left $U_q(\frak{g})$-modules, and $\Mod_{U_q(\frak{g})}$, the category of right $U_q(\frak{g})$-modules, as induced by the antipode. Explicitly, for any left $U_q(\frak{g})$-module $V$, its right $U_q(\frak{g})$-module structure is determined by 
\begin{align*}
v \triangleleft X = S^{-1}(X) \triangleright v, & & \text{ for all } v \in V, \, X \in U_q(\frak{g}).
\end{align*}
For further details on Drinfeld--Jimbo quantised enveloping algebras, we refer the reader to the standard
texts~\cite{ChariPressley, KSLeabh}, or to the seminal papers~\cite{DrinfeldICM, Jimbo1986}.

\subsection{Quantum Coordinate Algebras} 
In this subsection we recall some necessary material about quantised coordinate algebras.
Let $V$ be a finite-dimensional left $U_q(\frak{g})$-module, $v \in V$, and $f \in V^*$, the $\mathbb{C}$-linear dual of $V$, endowed with its  right \mbox{$U_q(\frak{g})$-module} structure. Let us note that, with respect to the equivalence between type-1 $U_q(\fg)$-modules and
finite-dimensional representations of~$\fg$, the left module corresponding to $V^*_{\mu}$ is isomorphic to $V_{-w_0(\mu)}$, where $w_0$ denotes the longest element in the Weyl group of $\frak{g}$.

Consider the function  $c^{\textrm{\tiny $V$}}_{f,v}:U_q(\frak{g}) \to \bC$ defined by $c^{\textrm{\tiny $V$}}_{f,v}(X) := f\big(X \triangleright v\big)$. The \emph{space of matrix coefficients} of $V$ is the subspace
\begin{align*}
C(V) := \text{span}_{\mathbb{C}}\!\left\{ c^{\textrm{\tiny $V$}}_{f,v} \,| \, v \in V, \, f \in V^*\right\} \sseq U_q(\frak{g})^*.
\end{align*}
A $U_q(\fg)$-bimodule structure on~$C(V)$ is given by
\begin{equation} \label{eq:Zact}
  (Y\triangleright c^{\textrm{\tiny $V$}}_{f,v} \triangleleft Z)(X) := f\left((ZXY)\triangleright v\right)
  = c^{\textrm{\tiny $V$}}_{f\triangleleft Z, Y \triangleright v}  (X) = c^{\textrm{\tiny $V$}}_{S^{-1}(Z)\, \triangleright f, Y \triangleright v}  (X),
\end{equation}
for all $X,Y,Z\in U_q(\frak{g})$.  Let $U_q(\frak{g})^\circ$ denote the Hopf dual of $U_q(\frak{g})$. It is easily checked that a Hopf  subalgebra of $U_q(\frak{g})^{\circ}$ is given by
\begin{equation}\label{eq:PeterWeyl}
\OO_q(G) := \bigoplus_{\mu \in \mathcal{P}^+} C(V_{\mu}).
\end{equation}
We call $\OO_q(G)$ the {\em quantum coordinate algebra of~$G$}, where~$G$ is the compact, connected, simply-connected, simple Lie group  having~$\frak{g}$ as its complexified Lie algebra.
Note that $\OO_q(G)$ is a cosemisimple Hopf algebra by construction.

\subsection{Quantum Flag Manifolds} \label{subsection:QFM}

For $\{\alpha_i\}_{i\in S} \sseq \Pi$ a subset of simple roots,  consider the Hopf $*$-subalgebra
\begin{align*}
U_q(\frak{l}_S) := \big< K_i, E_j, F_j \,|\, i = 1, \ldots, r; j \in S \big>.
\end{align*} 
Just as for $U_q(\frak{g})$, see for example~\cite[\textsection 7]{KSLeabh}, the category of $U_q(\frak{l}_S)$-modules is semisimple. The Hopf $*$-algebra embedding $\iota_S:U_q(\frak{l}_S) \hookrightarrow U_q(\frak{g})$ induces the dual Hopf \mbox{$*$-algebra} map $\iota_S^{\circ}: U_q(\frak{g})^{\circ} \to U_q(\frak{l}_S)^{\circ}$. By construction $\OO_q(G) \sseq U_q(\frak{g})^{\circ}$, so we can consider the restriction map
\begin{align*}
\pi_S:= \iota_S^{\circ}|_{\OO_q(G)}: \OO_q(G) \to U_q(\frak{l}_S)^{\circ},\
\end{align*}
and the Hopf $*$-subalgebra 
$
\OO_q(L_S) := \pi_S\big(\OO_q(G)\big) \sseq U_q(\frak{l}_S)^\circ.
$
The {\em quantum flag manifold associated} to $S$ is the quantum homogeneous space associated to the surjective Hopf $*$-algebra map  $\pi_S:\OO_q(G) \to \OO_q(L_S)$. We denote it by
\begin{align*}
\OO_q\big(G/L_S\big) := \OO_q \big(G\big)^{\text{co}\left(\OO_q(L_S)\right)}.
\end{align*} 
Since the category of $U_q(\frak{l}_S)$-modules is semisimple,  $\O_q(L_S)$ must be a cosemisimple Hopf algebra. Thus by Proposition \ref{prop:COSS.PCA}, the pair $(\O_q(G), \Delta_R)$ is a principal comodule algebra.

Denoting $\mu_S := \sum_{s\notin S} \varpi_s$, choose for $V_{\mu_S}$ a weight basis $\{v_i\}_{i}$, with corresponding dual basis $\{f_i\}_{i}$.  As shown in~\cite[Proposition 3.2]{HK}, writing $N := \dim(V_{\mu_S})$, a set of generators for $\OO_q(G/L_S)$ is given by 
\begin{align*}
z_{ij} := c^{V_{\mu_S}}_{f_i,v_N}c^{V_{-w_0(\mu_S)}}_{v_j,f_N}  & & \text{ for } i,j = 1, \dots, N,
\end{align*}
where $v_N$ is a highest weight vector of~$V_{\mu_S}$, and $f_N$ is a lowest weight vector of~$V_{-w_0(\mu_S)}$.

\subsection{The Heckenberger--Kolb Calculi and their Complex Structures}

The construction and classification of covariant differential calculi over the quantum flag manifolds poses itself as a very important and challenging question. At present this question has only been addressed for the irreducible quantum flag manifolds, a distinguished sub-family  whose definition we now recall.

\begin{defn}
A quantum flag manifold is \emph{irreducible} if the defining subset of simple roots is of the form
$$
S = \{1, \dots, r \} \setminus \{s\}
$$
where $\alpha_s$ has coefficient $1$ in the expansion of the highest root of $\frak{g}$.
\end{defn}

In the classical limit of $q=1$, these homogeneous spaces reduce to the compact Hermitian symmetric spaces, see for example~\cite[Table 10.1]{BastonEastwood} or~\cite[\textsection X.3]{Helgason1978}. For a convenient diagrammatic presentation of the explicit simple roots identified by this condition, as well as the dimensions of the classical differential manifolds, see~\cite[Appendix B]{DOKSS}.

The irreducible quantum flag manifolds are distinguished by the existence of an essentially unique $q$-deformation of their classical de Rham complexes. The existence of such a canonical deformation is one of the most important results in the noncommutative geometry of quantum groups, establishing it as a solid base from which to investigate more general classes of quantum spaces. The following theorem is a direct consequence of results established in~\cite{HK}, \cite{HKdR}, and~\cite{MarcoConj}. See~\cite[\textsection10]{DOSFred} for a more detailed presentation.

\begin{thm}\label{thm:HKClass}
Over any irreducible quantum flag manifold $\OO_q(G/L_S)$, there exists a unique finite-dimensional left $\OO_q(G)$-covariant differential $*$-calculus
\[
\Omega^{\bullet}_q(G/L_S) \in \modz{\OO_q(G)}{\OO_q(G/L_S)},
\]
 of {classical dimension}, that is to say, satisfying
  \begin{align*}
    \dim \Phi\!\left(\Omega^{k}_q(G/L_S)\right) = \binom{2M}{k}, & & \text{ for all \,} k = 0, \dots, 2 M,
  \end{align*}
  where $M$ is the complex dimension of the corresponding classical manifold.
\end{thm}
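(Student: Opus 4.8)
The plan is to synthesise the statement from the three cited works, which respectively control the first-order calculus, its prolongation to higher degrees, and the dimension count in each degree. First I would pin down the degree-one part. The Heckenberger--Kolb classification of covariant first-order differential calculi over the irreducible quantum flag manifolds \cite{HK} shows that, up to isomorphism, there are exactly two irreducible covariant first-order calculi over $\O_q(G/L_S)$, each of dimension $M$ under the functor $\Phi$, and that these are interchanged by the involution of any ambient differential $*$-calculus (in the classical limit they are the bundles of $(1,0)$- and $(0,1)$-forms). Consequently neither is a $*$-calculus on its own, and the unique covariant first-order differential $*$-calculus of classical dimension $\dim \Phi(\Omega^1) = 2M = \binom{2M}{1}$ is their direct sum, which I denote $\Omega^1_q(G/L_S)$.

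Next I would move to higher degrees through the maximal prolongation. Since a differential calculus is generated in degree $0$, every covariant differential $*$-calculus extending $\Omega^1_q(G/L_S)$ is a graded quotient of its maximal prolongation $\Omega^\bullet_{\mathrm{max}}$, and the latter is again covariant, as recalled in the preliminaries. The key input, supplied by \cite{HKdR} and completed for all irreducible types in \cite{MarcoConj}, is that $\Omega^\bullet_{\mathrm{max}}$ already realises the classical dimensions $\dim \Phi(\Omega^k_{\mathrm{max}}) = \binom{2M}{k}$ for every $k$, and in particular is finite-dimensional with top degree $2M$. Setting $\Omega^\bullet_q(G/L_S) := \Omega^\bullet_{\mathrm{max}}$ then yields existence.

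Uniqueness would follow formally. If $(\Gamma^\bullet, \exd)$ is any finite-dimensional covariant differential $*$-calculus of classical dimension in $\modz{\O_q(G)}{\O_q(G/L_S)}$, then its first-order part $\Gamma^1$ is a covariant first-order $*$-calculus with $\dim \Phi(\Gamma^1) = 2M$, hence isomorphic to $\Omega^1_q(G/L_S)$ by the first step. The universal property of the maximal prolongation then furnishes a surjective graded dg-algebra morphism $\Omega^\bullet_{\mathrm{max}} \to \Gamma^\bullet$; applying the exact functor $\Phi$ degree by degree gives surjections between spaces of equal finite dimension $\binom{2M}{k}$, which are therefore isomorphisms. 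Since $\Phi$ reflects isomorphisms by Takeuchi's equivalence (Theorem~\ref{thm:TakEquiv}), the morphism is itself an isomorphism and $\Gamma^\bullet \cong \Omega^\bullet_q(G/L_S)$.

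I expect the genuine difficulty to lie entirely in the intermediate-degree dimension count, that is, in showing that the maximal prolongation acquires neither spurious relations nor extra generators beyond the classical count $\binom{2M}{k}$ for $2 \le k \le 2M-1$. This is precisely the deep representation-theoretic content imported from \cite{HKdR} and \cite{MarcoConj}; once it is granted, the first-order classification of \cite{HK} together with the universal property of the maximal prolongation renders the remaining assembly, including uniqueness, purely formal.
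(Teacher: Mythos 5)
Your assembly is essentially the paper's own approach: the paper gives no independent proof of Theorem~\ref{thm:HKClass}, stating it as a direct consequence of \cite{HK} (the two irreducible covariant first-order calculi), \cite{HKdR} (the maximal prolongation of their direct sum having the classical dimensions $\binom{2M}{k}$), and \cite{MarcoConj}, with details deferred to \cite[\textsection 10]{DOSFred}, and your reconstruction --- unique covariant first-order $*$-calculus of dimension $2M$, maximal prolongation realising the classical dimensions, then the surjection-plus-dimension-count argument for uniqueness --- is precisely that derivation. The only slip is attributional rather than mathematical: the all-degrees dimension count for all irreducible types is already established in \cite{HKdR}, while \cite{MarcoConj} is invoked for the complex/K\"ahler structure refinements rather than for completing the dimension count.
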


The calculus $\Omega^{\bullet}_q(G/L_S)$, which we refer to as the \emph{Heckenberger--Kolb calculus} of $\OO_q(G/L_S)$,  has many remarkable properties. We recall here only the existence of a unique covariant complex structure, following from the results of~\cite{HK}, \cite{HKdR}, and~\cite{MarcoConj}.

\begin{prop} \label{prop:complexstructure}
Let $\OO_q(G/L_S)$ be an irreducible quantum flag manifold, and $\Omega^{\bullet}_q(G/L_S)$  its Heckenberger--Kolb differential $*$-calculus. Then the following hold:
\begin{enumerate}
\item $\Omega^{\bullet}_q(G/L_S)$ admits precisely two left $\OO_q(G)$-covariant complex structures, each of which is opposite to the other, 
\item for each complex structure $\Omega^{(1,0)}$ and $\Omega^{(0,1)}$ are simple objects in $\modz{\OO_q(G)}{\OO_q(G/L_S)}$.
\end{enumerate}
\end{prop}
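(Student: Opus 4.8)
The plan is to prove Proposition~\ref{prop:complexstructure} by translating both assertions into the representation theory of $U_q(\frak{l}_S)$ via Takeuchi's equivalence (Theorem~\ref{thm:TakEquiv}), applicable since $(\O_q(G),\Delta_R)$ is a principal comodule algebra and hence $\O_q(G)$ is faithfully flat over $\O_q(G/L_S)$. Under the monoidal functor $\Phi$, the covariant $\O_q(G)$-bimodule $\Omega^1_q(G/L_S)$ is sent to a finite-dimensional object $\Phi(\Omega^1_q(G/L_S)) \in \lmod{\O_q(L_S)}{}$, equivalently a representation of $U_q(\frak{l}_S)$ of dimension $\binom{2M}{1} = 2M$ by Theorem~\ref{thm:HKClass}. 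The key structural input I would invoke is that this cotangent representation decomposes as a direct sum of exactly two simple $U_q(\frak{l}_S)$-modules $V^{(1,0)} \oplus V^{(0,1)}$, mirroring the classical holomorphic/antiholomorphic splitting of the cotangent bundle of a Hermitian symmetric space. This decomposition is the heart of the Heckenberger--Kolb classification, and I would cite it from~\cite{HK,HKdR}; its restriction to the $q=1$ case recovers the irreducible $\frak{l}_S$-modules on the $\pm 1$ eigenspaces of the complex structure.

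First I would establish (2). A covariant complex structure induces, via $\Phi$, an $\bN^2_0$-grading on $\Phi(\Omega^\bullet_q(G/L_S))$ in the category $\lmod{\O_q(L_S)}{}$; in degree one this means $\Phi(\Omega^{(1,0)})$ and $\Phi(\Omega^{(0,1)})$ are subobjects summing to the two-dimensional (in the sense of two simple summands) object $\Phi(\Omega^1_q(G/L_S))$. Since the objects $\Omega^{(1,0)}, \Omega^{(0,1)}$ are nonzero $*$-conjugate halves, each maps under $\Phi$ to exactly one of the two simple summands $V^{(1,0)}, V^{(0,1)}$. Simplicity of these $U_q(\frak{l}_S)$-modules transfers back through the equivalence of categories: because $\Phi$ is an equivalence of monoidal categories between $\modz{\O_q(G)}{\O_q(G/L_S)}$ and the $\O_q(L_S)$-comodules (i.e.\ $U_q(\frak{l}_S)$-modules), a simple object corresponds to a simple object. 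Hence $\Omega^{(1,0)}$ and $\Omega^{(0,1)}$ are simple in $\modz{\O_q(G)}{\O_q(G/L_S)}$, giving~(2).

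For (1), the existence of at least one covariant complex structure is part of the Heckenberger--Kolb theory and I would take it as given from the cited results. To count them, I would argue that specifying a covariant $\bN^2_0$-grading compatible with Definition~\ref{defnnccs} amounts, in degree one, to choosing an ordered decomposition of $\Phi(\Omega^1_q(G/L_S)) = V^{(1,0)}\oplus V^{(0,1)}$ into its two simple summands, subject to the reality constraint $(\Omega^{(a,b)})^* = \Omega^{(b,a)}$. Since the two simple summands are distinct (they are $*$-conjugate but non-isomorphic, reflecting the classical holomorphic cotangent bundle being inequivalent to its conjugate), there are exactly two ways to label them as the $(1,0)$- and $(0,1)$-parts, and these two labelings are precisely opposite complex structures in the sense defined after Definition~\ref{defnnccs}. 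The compatibility conditions~\ref{compt-grading} and~\ref{eqn:integrable} then propagate uniquely to higher degrees via the wedge product and $\exd$, so the degree-one choice determines the whole grading; I would verify that both candidate gradings genuinely satisfy integrability~\ref{eqn:integrable}, which holds because $\exd$ is a comodule map and the first-order data is the only free choice.

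The main obstacle I anticipate is the uniqueness (counting) part of~(1): showing that a covariant complex structure is rigidly determined by the degree-one splitting and cannot produce extra structures at higher wedge degrees. Establishing that the two simple summands $V^{(1,0)}$ and $V^{(0,1)}$ are non-isomorphic as $U_q(\frak{l}_S)$-modules, so that the labeling is genuinely a binary choice rather than a continuum, is where I expect to lean most heavily on explicit weight data from the Heckenberger--Kolb classification rather than on general category-theoretic formalism. One must also confirm that the induced higher-degree projections $\proj_{\Omega^{(a,b)}}$ are well defined and satisfy~\ref{eqn:integrable}; this requires knowing that no mixed components $\Omega^{(a,b)}$ with both $a,b>0$ can be split differently, which again follows from the simplicity of the degree-one pieces together with the fact that $\Phi$ sends the exterior algebra structure to an exterior-type algebra on $V^{(1,0)}\oplus V^{(0,1)}$.
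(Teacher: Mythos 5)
First, a point of comparison: the paper does not actually prove this proposition---it is presented as a recollection of known results, following from \cite{HK}, \cite{HKdR}, and \cite{MarcoConj}, so any genuine argument here goes beyond what the paper supplies. Your reconstruction is a legitimate one, and it leans on precisely the inputs those references provide: that $\Phi(\Omega^1_q(G/L_S))$ is the direct sum of two \emph{non-isomorphic} simple $U_q(\frak{l}_S)$-modules (the content of Heckenberger--Kolb's classification of irreducible covariant first-order calculi in \cite{HK}), and that at least one covariant complex structure exists (\cite{HKdR}, \cite{MarcoConj}). The categorical glue you add is correct: simplicity transfers across Takeuchi's equivalence; a decomposition of a sum of two non-isomorphic simples into two nonzero subobjects is unique up to ordering (and both halves are indeed nonzero, since the $*$-condition forces them to vanish simultaneously); and since $\Omega^\bullet_q(G/L_S)$ is generated as a dg-algebra in degree zero, an $\bN_0^2$-algebra grading is completely determined by its degree-one part. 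The non-isomorphism of the two summands is the load-bearing fact---if they were isomorphic there would be a continuum of covariant degree-one splittings---and you are right that it must be taken from the explicit classification data of \cite{HK} rather than from category-theoretic generalities.

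One step as you state it would fail: you justify integrability of the label-swapped grading ``because $\exd$ is a comodule map and the first-order data is the only free choice.'' Covariance of $\exd$ has no bearing on condition 3 of Definition~\ref{defnnccs}, and integrability of a candidate grading is a genuine constraint, not a formality. The correct argument---which you already have in hand but do not deploy---is that the swapped grading is exactly the opposite complex structure $\ol{\Omega}^{(a,b)} = \Omega^{(b,a)}$ of the structure known to exist, and the opposite of a complex structure is automatically a complex structure, since conditions 1--3 of Definition~\ref{defnnccs} are manifestly stable under interchanging the two indices. With that substitution your counting argument closes: any covariant complex structure is determined by an ordered labelling of the two simple summands of $\Omega^1_q(G/L_S)$, there are exactly two such labellings, and both are realized, each being the opposite of the other.
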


Complementing this abstract characterisation of the calculus is the original presentation of Heckenberger and Kolb  in terms of the generators  $z_{ij} \in \O_q(G/L_S)$ given in~\cite{HKdR}. Here we need only recall the following: Consider the subset of the index set $J:= \{1,\ldots,\dim(V_{\varpi_s})\}$ given by
$$
J_{(1)} := \{ i\in J \mid (\varpi_s,\varpi_s - \alpha_s - \mathrm{wt}(v_i)) = 0\},
$$
where $\{v_i\}_{i\in J}$ is a weight basis of~$V_{\varpi_s}$. It follows from~\cite[Proposition~3.6]{HKdR} that we can make an explicit choice of left $A$-covariant complex structure
$$
\Omega^{\bullet}_q(G/L_S) \simeq  \bigoplus_{(a,b)\in\mathbb{N}_0^2} \Omega^{(a,b)} =: \Omega^{(\bullet,\bullet)}
$$ 
uniquely defined by the fact that a basis of $\Phi(\Omega^{(0,1)})$ is given by 
\begin{align} \label{eqn:HKbasis}
\left\{[\adel z_{Ni}] \,| \, \text{ for } i\in J_{(1)}\right\}\!.
\end{align}

\subsection{Holomorphic Modules}

Here we establish existence and uniqueness results of holomorphic structures for relative Hopf modules over the irreducible quantum flag manifolds. We begin by observing that the general theory of principal comodule algebras, together with cosemisimplicity of $\O_q(L_S)$, implies the existence of covariant connections.

\begin{lem} \label{lem:Ecovconnection}
Let $\OO_q(G/L_S)$ be an irreducible quantum flag manifold  endowed with its Heckenberger--Kolb calculus $\Omega^{\bullet}_q(G/L_S)$.  Every $\F \in \modz{\OO_q(G)}{\OO_q(G/L_S)}$ admits a left $\OO_q(G)$-covariant connection $\nabla: \F \to \Omega^1_q(G/L_S) \otimes_{\OO_q(G/L_S)} \F$. 
\end{lem}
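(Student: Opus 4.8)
The plan is to obtain the connection by specialising the general machinery of Section~\ref{Sec:Section3} to the data $A = \OO_q(G)$, $H = \OO_q(L_S)$, and $B = \OO_q(G/L_S)$, the decisive structural input being cosemisimplicity. As observed in \textsection\ref{subsection:QFM}, the semisimplicity of the category of $U_q(\frak{l}_S)$-modules forces $\OO_q(L_S)$ to be a cosemisimple Hopf algebra, which is exactly the hypothesis needed below. Throughout, an object $\F \in \modz{\OO_q(G)}{\OO_q(G/L_S)}$ is regarded, via its underlying left module structure, as a finitely generated relative Hopf module in $\lmod{\OO_q(G)}{\OO_q(G/L_S)}$; this is legitimate since the connection condition \eqref{eqn:connLR} involves only the left $B$-action.

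First I would apply Lemma~\ref{prop:COSS.PCA}, which yields that $(\OO_q(G), \Delta_R)$ is a principal comodule algebra and, more importantly, that $\Omega^1_u(\OO_q(G))$ carries a left $\OO_q(G)$-covariant strong principal connection $\Pi$. Feeding $\Pi$ into the construction recalled in \textsection\ref{subsection:QHSPCA}, one obtains for each such $\F$ an associated universal connection $\nabla_u \colon \F \to \Omega^1_u(B) \otimes_B \F$. Since $\exd_u$ and $\Pi$, and hence $\id - \Pi$, are left $\OO_q(G)$-comodule maps, the resulting $\nabla_u$ is itself a left $\OO_q(G)$-comodule map, i.e.\ covariant.

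It remains to descend to the non-universal Heckenberger--Kolb calculus. By Theorem~\ref{thm:HKClass}, $\Omega^{\bullet}_q(G/L_S)$ is a finite-dimensional left $\OO_q(G)$-covariant differential $*$-calculus over $B$, so its space of one-forms is a quotient $\Omega^1_q(G/L_S) = \Omega^1_u(B)/N$ by a proper $B$-sub-bimodule $N$; covariance of the calculus means precisely that $N$ is a left $\OO_q(G)$-sub-comodule of $\Omega^1_u(B)$. I would then invoke Corollary~\ref{cor:quotientconn}: composing $\nabla_u$ with $\proj_N \otimes \id$ produces a non-zero connection $\nabla \colon \F \to \Omega^1_q(G/L_S) \otimes_B \F$. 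As $\proj_N$ is a left $\OO_q(G)$-comodule map, the covariance of $\nabla_u$ passes to $\nabla$, completing the argument.

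The content here is really all imported from Section~\ref{Sec:Section3}, so I do not expect a serious obstacle; the only genuinely load-bearing verifications are that $N$ is a covariant sub-bimodule (guaranteed by covariance of the Heckenberger--Kolb calculus) so that the quotient connection remains covariant, and the harmless identification of the bimodule objects of $\modz{\OO_q(G)}{\OO_q(G/L_S)}$ with finitely generated left-module relative Hopf modules, which is what makes the strong-principal-connection construction applicable.
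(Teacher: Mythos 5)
Your proposal is correct and follows essentially the same route as the paper: cosemisimplicity of $\OO_q(L_S)$ gives a left $\OO_q(G)$-covariant strong principal connection via Lemma~\ref{prop:COSS.PCA}, hence a covariant universal connection as in \textsection\ref{subsection:SPC}, which is then quotiented to the Heckenberger--Kolb calculus by Corollary~\ref{cor:quotientconn}. Your explicit check that covariance survives the quotient (because the sub-bimodule $N$ is a left $\OO_q(G)$-sub-comodule) is a detail the paper leaves implicit, but it is the same argument.
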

\begin{proof}
As observed in \textsection \ref{subsection:QFM}, each irreducible quantum flag manifold is a principal comodule algebra. Following the discussion of  \textsection \ref{subsection:SPC}, this implies the existence of a covariant universal connection for each $\F \in \modz{\OO_q(G)}{\OO_q(G/L_S)}$. Corollary \ref{cor:quotientconn} now implies that we can quotient this connection to produce a covariant connection with respect to the Heckenberger--Kolb calculus $\Omega^{\bullet}_q(G/L_S)$.
\end{proof}

We now use Proposition \ref{prop:uniqueness} to show uniqueness for covariant connections whenever $\F$ is simple. This is most easily done by considering $\Phi(\F)$ as a module over  the centre of $U_q(\frak{l}_S)$. Recalling that the transpose of the Cartan matrix $A$ is the change of basis matrix taking fundamental weights to simple roots, we see that $\det(A) \varpi_s$ is contained in the root lattice of $\frak{g}$. Denoting
$$
\det(A) \varpi_s =: a_1 \alpha_1 + \cdots + a_r \alpha_r,
$$
it follows directly from the commutation relations of~$U_q(\fg)$ that 
\begin{align*}
Z := K^{a_1}_1 \cdots K^{a_r}_r
\end{align*}
is a central and grouplike element of $U_q(\frak{l}_S)$. 
Recall that the elements of the centre~$\frak{z}(U_q(\frak{l}_S))$ of $U_q(\frak{l}_S)$ act on any irreducible
$U_q(\frak{l}_S)$-module~$V$ by a corresponding central
character~$\chi_V$, which is to say, an element of $\mathrm{Hom}(\frak{z}(U_q(\frak{l}_S)), \mathbb{C})$ the set of algebra maps from
$\frak{z}(U_q(\frak{l}_S))$ to $\mathbb{C}$.

Recall that $\Phi(\Omega^{(0,1)})$ is spanned by the elements of the form~\eqref{eqn:HKbasis}. Therefore, for any $i\in
J_{(1)}$, we have
\[
  [\bar\del z_{Ni}] \triangleleft Z
  = [\bar\del (c^{\varpi_s}_{f_N\triangleleft Z,v_N}c^{-w_0(\varpi_s)}_{v_i\triangleleft Z,f_N})]
  = q^{(\varpi_s,\alpha_s)\det(A)}[\bar\del z_{Ni}].
\]
Hence, $\chi_{\Phi(\Omega^{(0,1)})}(Z) = q^{(\varpi_s,\alpha_s)\det(A)}$. Thus since 
$(\varpi_s,\alpha_s) \neq 0$  we have that 
\begin{equation}\label{eq:ZOm01Char}
  \chi_{\Phi(\Omega^{(0,1)})}(Z)  \neq 1,-1.
\end{equation}
Note also that for $V$ and $W$ two irreducible $U_q(\frak{l}_S)$-modules
the grouplike elements of the centre~$\frak{z}(U_q(\frak{l}_S))$ act on $V \otimes W$ by a central character $\chi_{V\otimes W}$ according to 
\begin{align*}
\chi_{V\otimes W}(x) = \chi_V(x)\chi_W(x), & & \textrm{ for any grouplike } x \in \frak{z}(U_q(\frak{l}_S)).
\end{align*}

\begin{thm} \label{prop:covconnection}
Let $\OO_q(G/L_S)$ be an irreducible quantum flag manifold  endowed with its Heckenberger--Kolb calculus, and
$\F \in \modz{\O_q(G)}{\O_q(G/L_S)}$. It holds that 
\begin{enumerate}
\item $\F$ admits a left $\O_q(G)$-covariant connection $\nabla: \F \to \Omega^1_q(G/L_S) \otimes_{\O_q(G/L_S)} \F$, and this is the unique such connection if $\F$ is simple,
\item $\adel_{\F} := \mathrm{proj}^{(0,1)} \circ \nabla$ is a left $\O_q(G)$-covariant holomorphic structure for $\F$, and this is the unique such holomorphic structure if $\F$ is simple.
\end{enumerate}
\end{thm}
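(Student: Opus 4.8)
The plan is to reduce every assertion to a statement about morphism spaces in the category $\modz{\O_q(G)}{\O_q(G/L_S)}$, and then to transport these, via Takeuchi's monoidal equivalence (Theorem~\ref{thm:TakEquiv}), into the representation theory of $U_q(\frak{l}_S)$, where they can be settled using the central grouplike element $Z$. The existence claim in (1) is immediate from Lemma~\ref{lem:Ecovconnection}, so the substance of the theorem lies in the two uniqueness statements and in the flatness of $\adel_\F$. Writing $A=\O_q(G)$, $B=\O_q(G/L_S)$ and $H=\O_q(L_S)$, the key observation is that for any $\F$ the functor $\Phi$ identifies $^A_B\mathrm{Hom}\big(\F,\Omega^{(a,b)}\otimes_B\F\big)$ with $\mathrm{Hom}_{U_q(\frak{l}_S)}\big(\Phi(\F),\Phi(\Omega^{(a,b)})\otimes\Phi(\F)\big)$, the last step using that $\Phi$ is monoidal. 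It therefore suffices to record how $Z$ acts on the modules $\Phi(\Omega^{(a,b)})$.

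For the uniqueness statements I take $\F$ simple, so that $\Phi(\F)$ is an irreducible $U_q(\frak{l}_S)$-module and $Z$, being central, acts on it by a single scalar $\chi_{\Phi(\F)}(Z)$; here one uses that the weights of an irreducible $U_q(\frak{l}_S)$-module differ by roots $\alpha_j$ with $j\in S$, all of which are orthogonal to $\varpi_s$. Since $Z$ is grouplike it acts on a tensor product by the product of characters, so on $\Phi(\Omega^{(0,1)})\otimes\Phi(\F)$ it acts by $q^{(\varpi_s,\alpha_s)\det(A)}\chi_{\Phi(\F)}(Z)$, which by~\eqref{eq:ZOm01Char} differs from $\chi_{\Phi(\F)}(Z)$; an entirely analogous computation (or the $*$-structure $(\Omega^{(1,0)})^*=\Omega^{(0,1)}$) gives $\chi_{\Phi(\Omega^{(1,0)})}(Z)=q^{-(\varpi_s,\alpha_s)\det(A)}\neq 1$. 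Matching $Z$-eigenvalues then forces the relevant Hom-spaces to vanish, so Proposition~\ref{prop:uniqueness} yields uniqueness: for (1) applied to $\Omega^1=\Omega^{(1,0)}\oplus\Omega^{(0,1)}$, and for (2) applied to the calculus $(\Omega^{(0,\bullet)},\adel)$.

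It remains to show that $\adel_\F:=\proj^{(0,1)}\circ\nabla$, already a covariant $(0,1)$-connection, is flat for arbitrary $\F$. First suppose $\F$ is simple. The curvature $\adel_\F^2$ is a morphism into $\Omega^{(0,2)}\otimes_B\F$, and since $\Phi(\Omega^{(0,2)})$ is a quotient of $\Phi(\Omega^{(0,1)})^{\otimes 2}$, the grouplike $Z$ acts on it by $\chi_{\Phi(\Omega^{(0,1)})}(Z)^2=q^{2(\varpi_s,\alpha_s)\det(A)}$; this is exactly where the strict inequality $\chi_{\Phi(\Omega^{(0,1)})}(Z)\neq -1$ of~\eqref{eq:ZOm01Char} is needed, guaranteeing the square remains $\neq 1$. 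Hence $\mathrm{Hom}_{U_q(\frak{l}_S)}\big(\Phi(\F),\Phi(\Omega^{(0,2)})\otimes\Phi(\F)\big)=0$ and Proposition~\ref{prop:flatness} gives $\adel_\F^2=0$. For general $\F$, I use that $\modz{\O_q(G)}{\O_q(G/L_S)}$ is semisimple, being equivalent via $\Phi$ to the semisimple category of finite-dimensional $U_q(\frak{l}_S)$-modules, so $\F\cong\bigoplus_j\F_j$ with each $\F_j$ simple. As the connection associated (as in \textsection\ref{subsection:SPC}) to the strong principal connection of Lemma~\ref{prop:COSS.PCA} is built from the explicit formula $f\mapsto j^{-1}\big((\id-\Pi)\exd_u f_{(-1)}\otimes[f_{(0)}]\big)$, it is additive in $\F$; hence $\adel_\F=\bigoplus_j\adel_{\F_j}$ and $\adel_\F^2=\bigoplus_j\adel_{\F_j}^2=0$, so $(\F,\adel_\F)$ is a holomorphic relative Hopf module.

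The main obstacle is precisely this last reduction. For non-simple $\F$ the module $\Phi(\F)$ carries several distinct $Z$-eigenvalues, and $\mathrm{Hom}_{U_q(\frak{l}_S)}\big(\Phi(\F),\Phi(\Omega^{(0,2)})\otimes\Phi(\F)\big)$ need \emph{not} vanish, so flatness cannot be read off from a single Hom-space computation as in the simple case; one genuinely has to exploit semisimplicity together with additivity of the canonical connection to split the problem into simple summands. A secondary subtlety, easy to overlook, is that the squaring in the $\Omega^{(0,2)}$ computation is exactly why~\eqref{eq:ZOm01Char} is stated with $\neq 1,-1$ rather than merely $\neq 1$.
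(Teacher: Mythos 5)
Your proposal is correct and follows essentially the same route as the paper: existence from the strong principal connection of Lemma~\ref{lem:Ecovconnection}, uniqueness and flatness by transporting the Hom-space vanishing criteria of Propositions~\ref{prop:uniqueness} and~\ref{prop:flatness} through Takeuchi's equivalence and computing the central character of $Z$ on $\Phi(\Omega^{(0,1)})$, $\Phi(\Omega^{(1,0)})$, and $\Phi(\Omega^{(0,2)})$ (where, as you note, the condition $\chi_{\Phi(\Omega^{(0,1)})}(Z) \neq -1$ in \eqref{eq:ZOm01Char} is what makes the squared character differ from $1$), and cosemisimplicity to split a non-simple $\F$ into simple summands. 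Your one addition --- checking that the connection built from the strong principal connection is additive over direct summands, so that the specific $\adel_{\F} = \proj_{\Omega^{(0,1)}} \circ \nabla$ of the statement is itself flat for non-simple $\F$ --- is a slight sharpening of the paper's closing remark, which only observes that the direct sum of the summands' holomorphic structures yields a covariant holomorphic structure.
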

\begin{proof}
1. By Lemma \ref{lem:Ecovconnection} a covariant connection exists. Assuming that $\F$ is simple,  it follows from \eqref{eq:ZOm01Char} that 
  \[
    \chi_{\Phi(\Omega^{(0,1)}) \otimes \Phi(\F)}(Z) = \chi_{\Phi(\Omega^{(0,1)})}(Z) \chi_{\Phi(\F)}(Z) \neq \chi_{\Phi(\F)}(Z).
  \]
By Schur's lemma we conclude that there are no non-zero $U_q(\frak{l}_S)$-module maps from $\Phi(\F)$ to $\Phi(\Omega^{(0,1)}) \otimes \Phi(\F)$. Moreover, since we have a non-degenerate dual pairing between $U_q(\frak{l}_S)$ and $\mathcal{O}_q(L_S)$, there are no non-zero $\O_q(L_S)$-comodule maps from~$\Phi(\F)$ to $\Phi(\Omega^{(0,1)}) \otimes \Phi(\F)$. This in turn implies that there can exist no non-zero morphisms from $\F$ to $\Omega^{(0,1)} \otimes_{\O_q(G/L_S)} \F$. Proposition~\ref{prop:uniqueness} now implies that there exists at most one left $\OO_q(G)$-covariant connection on~$\F$ for the calculus~$\Omega^{(0,\bullet)}$. An analogous argument shows that there exists at most one left $\OO_q(G)$-covariant connection on~$\F$ for the calculus~$\Omega^{(\bullet,0)}$. Hence if~$\F$ is simple, then \mbox{$\nabla: \F \to \Omega^1_q(G/L_S) \otimes_{\O_q(G/L_S)} \F$} is the unique such covariant connection.

\bigskip

2.  Since $\Omega^{\bullet}$ is a monoid object in $\modz{\O_q(G)}{\O_q(G/L_S)}$, we see that $\Phi(\Omega^{\bullet})$ has the structure of a monoid object in $\lmod{\O_q(L_S)}{}$, or in other words,  it has the structure of a left $\O_q(L_S)$-comodule algebra. In particular, for any two forms $\omega, \nu \in \Omega^{\bullet}$, it holds that
\begin{align*}
([\omega] \wedge [\nu]) \triangleleft Z = ([\omega] \triangleleft Z) \wedge ([\nu] \triangleleft Z).
\end{align*}
Thus we see that 
\begin{align*}
  \chi_{\Phi(\Omega^{(0,2)})}(Z)  = \left(\chi_{\Phi(\Omega^{(0,1)})}(Z)\right)^2.
\end{align*}
From this we see that, for any irreducible  $\F$,
\[
  \chi_{\Phi(\Omega^{(0,2)}) \otimes \Phi(\F)}(Z)  = \left(\chi_{\Phi(\Omega^{(0,1)})}(Z) \right)^2 \chi_{\Phi(\F)}(Z) \neq \chi_{\Phi(\F)}(Z),
\]
where we have used  \eqref{eq:ZOm01Char}. Following the same argument as for $(0,1)$-forms in part 1 of the proof, this means that there are no non-zero {$\O_q(G)$-comodule} maps from~$\F$ to~$\Omega^{(0,2)} \otimes_{\O_q(G/L_S)} \F$. Flatness of the $(0,1)$-connection $\adel_{\F}$ now follows from Proposition \ref{prop:flatness}. Uniqueness was already established in 1.

For the case of a non-simple $\F$, cosemisimplicity of $\O_q(L_S)$ implies that $\F$ is a direct sum of a finite number of simple objects $\F \simeq \bigoplus_{i} \F_i$. The direct sum of the covariant holomorphic structures of the summands $\F_i$ gives a covariant holomorphic structure for $\F$.
\end{proof}

Let us now consider the torsion (see Definition \ref{defn:Torsion}) for the special case of the connection
$$
\adel_{\Omega^{(0,1)}}: \Omega^{(0,1)} \to \Omega^{(0,1)} \otimes_{\O_q(G/L_S)} \Omega^{(0,1)}.
$$
Since $T_{\adel_{\Omega^{(0,1)}}}$ is necessarily a left $\O_q(G/L_S)$-module map, we can consider its  image under the functor $\Phi$. As was argued in the proof of Theorem~\ref{prop:covconnection}, it follows from \eqref{eq:ZOm01Char} that 
$$
\Phi(T_{\adel_{\Omega^{(0,1)}}}) = 0.
$$
This gives us the following result.

\begin{prop} \label{prop:torsion}
The connection $\adel_{\Omega^{(0,1)}}$ is torsion-free.
\end{prop}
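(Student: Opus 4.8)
The plan is to show that the torsion operator $T_{\adel_{\Omega^{(0,1)}}} := \wedge \circ \adel_{\Omega^{(0,1)}} - \adel$ vanishes by transporting the question through Takeuchi's equivalence into a representation-theoretic statement about central characters, exactly as was done for flatness and uniqueness in Theorem~\ref{prop:covconnection}. The key observation is that $T_{\adel_{\Omega^{(0,1)}}}$ is a left $\O_q(G/L_S)$-module map (it is the difference of two operators each satisfying a suitable Leibniz-type relation, so the bimodule terms cancel), and moreover it is a left $\O_q(G)$-comodule map because both $\wedge$, $\adel$, and the covariant connection $\adel_{\Omega^{(0,1)}}$ are covariant. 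Hence $T_{\adel_{\Omega^{(0,1)}}}$ is a morphism in the category $\modz{\O_q(G)}{\O_q(G/L_S)}$, and we may apply the functor $\Phi$ to obtain a morphism $\Phi(T_{\adel_{\Omega^{(0,1)}}}) : \Phi(\Omega^{(0,1)}) \to \Phi(\Omega^{(0,1)} \otimes_{\O_q(G/L_S)} \Omega^{(0,1)})$ in $\lmod{\O_q(L_S)}{}$.

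The next step is to compute the relevant central characters of the grouplike element $Z = K_1^{a_1} \cdots K_r^{a_r} \in \frak{z}(U_q(\frak{l}_S))$ on source and target. By the monoid-algebra structure of $\Phi(\Omega^{\bullet})$ established in part~2 of the proof of Theorem~\ref{prop:covconnection}, we have
\[
\chi_{\Phi(\Omega^{(0,1)} \otimes_{\O_q(G/L_S)} \Omega^{(0,1)})}(Z) = \left(\chi_{\Phi(\Omega^{(0,1)})}(Z)\right)^2,
\]
whereas the source carries the character $\chi_{\Phi(\Omega^{(0,1)})}(Z)$. Since $\chi_{\Phi(\Omega^{(0,1)})}(Z) = q^{(\varpi_s,\alpha_s)\det(A)}$ and $(\varpi_s,\alpha_s) \neq 0$, the estimate~\eqref{eq:ZOm01Char} forces
\[
\left(\chi_{\Phi(\Omega^{(0,1)})}(Z)\right)^2 \neq \chi_{\Phi(\Omega^{(0,1)})}(Z).
\]
Because $Z$ is central, it acts as the scalar given by its central character on each irreducible constituent, and the two sides here realise distinct scalars. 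Any $\O_q(L_S)$-comodule map intertwines the $Z$-action, so it must vanish on any isotypic piece where the scalars disagree; since they disagree everywhere, $\Phi(T_{\adel_{\Omega^{(0,1)}}}) = 0$.

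Finally, I would invoke faithfulness of Takeuchi's equivalence (Theorem~\ref{thm:TakEquiv}): since $\Phi$ is an equivalence of categories it is in particular faithful, so $\Phi(T_{\adel_{\Omega^{(0,1)}}}) = 0$ implies $T_{\adel_{\Omega^{(0,1)}}} = 0$, which is precisely the assertion that $\adel_{\Omega^{(0,1)}}$ is torsion-free. The main obstacle, and the only genuinely non-routine point, is verifying that $T_{\adel_{\Omega^{(0,1)}}}$ really is a morphism in $\modz{\O_q(G)}{\O_q(G/L_S)}$ — that is, checking carefully that the non-tensorial $-\adel$ term does not spoil left $\O_q(G/L_S)$-linearity. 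This follows because both $\wedge \circ \adel_{\Omega^{(0,1)}}$ and $\adel$ satisfy the same graded Leibniz rule with respect to left multiplication, so their difference is left-linear; the covariance is immediate from covariance of each constituent map. Once this is in hand, the character computation and the appeal to Schur's lemma and faithfulness are entirely parallel to the argument already given for flatness.
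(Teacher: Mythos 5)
Your proof is correct and follows essentially the same route as the paper: you observe that the torsion is a morphism in $\modz{\O_q(G)}{\O_q(G/L_S)}$ (the Leibniz terms cancelling), apply $\Phi$, and use the central character of the grouplike element $Z$ together with \eqref{eq:ZOm01Char}, Schur's lemma, and faithfulness of Takeuchi's equivalence to conclude the torsion vanishes, which is exactly the paper's argument. The only cosmetic discrepancy is that you write the codomain of the torsion as $\Omega^{(0,1)} \otimes_{\O_q(G/L_S)} \Omega^{(0,1)}$ (inheriting the slip in Definition~\ref{defn:Torsion}) rather than $\Omega^{(0,2)}$, but since $\chi_{\Phi(\Omega^{(0,2)})}(Z) = \big(\chi_{\Phi(\Omega^{(0,1)})}(Z)\big)^2$ as well, this does not affect the character computation.
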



\begin{remark} \label{rem:BBW}
For each ${\mathcal{F}} \in \modz{\O_q(G)}{\O_q(G/L_S)}$, the existence of a holomorphic structure $\adel_{\mathcal{F}}$ gives a complex
$$
\adel_{{\mathcal{F}}}: \Omega^{k} \otimes_{\O_q(G/L_S)} {\mathcal{F}}  \to \Omega^{k+1} \otimes_{\O_q(G/L_S)} {\mathcal{F}},
$$
with associated cohomology groups
$$
H^k_{\adel}(\F) \simeq  \bigoplus_{(a,b) \in \mathbb{N}^2_0} H^{(a,b)}_{\adel}(\F). 
$$
By the Hodge decomposition theorem for noncommutative K\"ahler structures \cite[Theorem 6.4]{OSV}, with respect to some choice of $\O_q(G)$-covariant Hermitian structure for $\F$ (see  \cite[Definition 2.13]{OSV}),  each cohomology group $H^{(a,b)}_{\adel}(\F)$ can be identified with the space of \emph{$(a,b)$-harmonic forms}  
$$
\mathcal{H}_{\adel}^{(a,b)}(\F) := \ker\!\left(\Delta_{\adel}:\Omega^{(a,b)} \otimes_{\O_q(G/L_S)} \F \to \Omega^{(a,b)} \otimes_{\O_q(G/L_S)} \F\right)\!.
$$  
Since the associated Laplacian $\Delta_{\adel}$ is a $U_q(\frak{g})$-module map, the space $\mathcal{H}^{(a,b)}_{\adel}(\F)$ carries an action of $U_q(\frak{g})$, and hence induces a $U_q(\frak{g})$-module structure on $H^{(a,b)}_{\adel}(\F)$. Finding an explicit description of these $U_q(\frak{g})$-modules, and comparing them with their classical counterparts (see \cite{BottBW} and  \cite{BorelWeilSerre}) is an important and interesting challenge.
\end{remark}

\begin{remark}
The proof that $\adel_{\F}$ is a flat $(0,1)$-connection is readily adapted to show that   
$
\del_{\F} := \mathrm{proj}^{(1,0)} \circ \nabla
$
satisfies 
\begin{align} \label{eqn:antiholostructure}
\del_{\F}^2 = 0.
\end{align}
This can be described more formally in terms of opposite complex structures. Indeed, \eqref{eqn:antiholostructure} says that $\del$ is a holomorphic structure with respect to the opposite complex structure of $\Omega^1_q(G/L_S)$. Moreover, by the same argument given for Proposition~\ref{prop:torsion}, this holomorphic structure is torsion-free. 
\end{remark}

 \bibliographystyle{siam}

\end{document}